\documentclass[a4paper,11pt]{article}
\usepackage{hyperref}
\usepackage{geometry}
\usepackage{amssymb}
\usepackage{amsmath}
\usepackage{amsthm}
\usepackage{authblk}
\usepackage{paralist}
\usepackage{titlefoot}
\usepackage[numbers]{natbib} % has a nice set of citation styles and commands

\newtheorem{theorem}{Theorem}[section]
\newtheorem*{theorem*}{Theorem}
\newtheorem{remark}[theorem]{Remark}
\newtheorem{definition}[theorem]{Definition}
\newtheorem*{definition*}{Definition}

\newtheorem{lemma}[theorem]{Lemma}
\newtheorem*{lemma*}{Lemma}
\newtheorem{corollary}[theorem]{Corollary}
\newtheorem*{corollary*}{Corollary}
\newtheoremstyle{boldremark}
    {\dimexpr\topsep/2\relax} % space above
    {\dimexpr\topsep/2\relax} % space below
    {}          % body font
    {}          % indent amount
    {\bfseries} % theorem head font
    {.}         % punctuation after theorem head
    {.5em}      % space after theorem head
    {}          % theorem hed spec. (empty = "normal")
\theoremstyle{boldremark}

\newtheorem{assumption}[theorem]{Assumption}
\newtheorem*{assumption*}{Assumption}

\newcommand{\R}{\mathbb{R}}

\renewcommand{\P}{\mathbb{P}}
\newcommand{\E}{\mathbb{E}}
\newcommand{\A}{\mathbb{A}}

\newcommand{\cF}{\mathcal{F}}

\newcommand{\cP}{\mathcal{P}}

\newcommand{\cL}{\mathcal{L}}

\renewcommand{\epsilon}{\varepsilon}

\newcommand{\dt}{\,\mathrm{d}t}

\newcommand{\dlambda}{\,\mathrm{d}\lambda}
\newcommand{\dgamma}{\,\mathrm{d}\gamma}
\newcommand{\dW}{\,\mathrm{d}W}

\begin{document}

% \Header

\title{A Novel Approach to Peng's Maximum Principle for McKean-Vlasov Stochastic Differential Equations}
\date{\today}
	
\author[1]{Johan Benedikt Spille}
\author[1]{Wilhelm Stannat}
	
\affil[1]{Technische Universität Berlin, Berlin, Germany}
	
% \affil{\small Technische Universit\"at Berlin, Berlin, Germany}
	
\maketitle

\unmarkedfntext{\textit{Mathematics Subject Classification (2020) --- 
93E20, %Optimal stochastic control
49K45, %Optimality conditions for problems involving randomness
49N80, %Mean field games and control
60H30, %Applications of stochastic analysis
60H10, %Stochastic ordinary differential equations
49N15 % Duality theory (optimization)
}}
	
\unmarkedfntext{\textit{Keywords and phrases --- 
Stochastic control,
Maximum principle,
McKean-Vlasov equation,
Mean-field SDE,
Duality in optimization,
Adjoint calculus}}
	
\unmarkedfntext{\textit{Mail}: \textbullet \href{mailto:spille@math.tu-berlin.de}{spille@math.tu-berlin.de},\,\textbullet \href{mailto:stannat@math.tu-berlin.de}{stannat@math.tu-berlin.de}}

\begin{abstract}
    We present a novel approach to the proof of Peng's maximum principle for McKean-Vlasov stochastic differential equations (SDE). The main step is the introduction of a third adjoint equation, a conditional McKean-Vlasov backward SDE, to accommodate the dualization of quadratic terms containing two independent copies of the first-order variational process. This is an intrinsic extension of the maximum principle from Peng for standard SDE and gives a conceptually consistent proof. 
    Our approach will be useful in further extensions to the common noise setting and the infinite dimensional setting.
\end{abstract}

%\newpage
\tableofcontents

\section{Introduction}
In this paper, we are concerned with the following optimization problem: Minimize over all admissible controls the cost functional
\begin{equation}\label{eq:cost}
    J(\alpha)
    = \mathbb{E}\left[
        \int_0^T f(t,X_t,\mu_t,\alpha_t)\dt
        + g(X_T,\mu_T)
    \right],
\end{equation}
where we fix a terminal time $T>0$ and $f:[0,T]\times \R^d\times \cP_2(\R^d)\times U\to \R$ and $g:\R^d\times \cP_2(\R^d)\to \R$ are deterministic functions, subject to the state equation, given by the controlled McKean-Vlasov SDE
\begin{equation}\label{eq:state}
    dX_t = A(t,X_t,\mu_t,\alpha_t)\dt 
          + B(t,X_t,\mu_t,\alpha_t)\dW_t,
    \qquad X_0 = x_0,
\end{equation}
where $\mu_t := \mathcal{L}(X_t)$ denotes the distribution of $X_t$, 
$(A,B):[0,T]\times \R^d\times \cP_2(\R^d)\times U\to \R^d\times\R^{d\times d}$ are deterministic and 
$W$ is a $d$-dimensional Brownian motion on a complete filtered probability space $(\Omega,\mathcal{F},\mathbb{F},\mathbb{P})$ such that $\mathbb{F}=(\mathcal{F}_t)_{0\le t\le T}$ is the augmented filtration generated by $W$. Here, an admissible control process $\alpha=(\alpha_t)_{0\le t\le T}$ takes values in a non-empty, not necessarily convex set $U\subset\mathbb{R}^m$, is progressively measurable and $\mathbb{E}[\int_0^T |\alpha_t|^2\dt] < \infty.$ The set of all admissible controls is denoted by $\mathbb{A}$.

The general goal is to deduce a necessary Pontryagin-type optimality condition, first developed by \citet{Pontryagin1962} in the deterministic setting. Later, this method was extended by \citet{Bismut1978} to SDEs. 
The generalization to non-convex control domains requires in the stochastic setting a second-order Taylor expansion of the cost functional. To this end, \citet{Peng1990} introduced a second-order adjoint state and derived a corresponding maximum principle, nowadays known as Peng's maximum principle.

In many applications the dynamics of the controlled process depend not only on its current state but also on its distribution. These dynamics are modeled by McKean-Vlasov SDEs like \eqref{eq:state}, originally studied in the context of interacting particle systems \cite{Kac1956,McKean1967,Sznitman1991} and now fundamental in mean-field games and large population control problems \cite{LasryLions2007,CarmonaDelarue2018I}. The optimal control of McKean-Vlasov dynamics has therefore attracted significant attention in recent years. 

Early results use convex control domains, and derive Pontryagin maximum principles adapted to the McKean-Vlasov setting \cite{Andersson2011,CarmonaDelarue2015,CarmonaDelarue2018I,HocquetVogler2020}. Such maximum principles have also been generalized to infinite dimensional settings \cite{Ahmed2016,Dumitrescu2018,Tang2019,spille2025}. In comparison Peng's maximum principle for McKean-Vlasov SDEs has only been treated in finite dimensions so far. 
These extensions to non-convex control domains with mean-field dependence were done by \citet{Buckdahn2011,Buckdahn2016,BuckdahnPeng2017}. 

The main goal of this paper now, is not to provide a new maximum principle. Instead we provide a novel approach to the proof of the maximum principle given by \citet{Buckdahn2016}. 
Following the lines in \citet{Peng1990}, the second-order Taylor expansion of the cost functional involves quadratic terms coming from the (mixed) Lions derivative with respect to $\mu_t$, which cannot be dualized in the same way as the quadratic terms coming from the derivative with respect to the state (cf. Remark \ref{remark:Discussion at the end}). \citet{Buckdahn2016} observed that these terms are in fact of lower-order, since they involve taking suitable expectations, and, thus, do not contribute to the maximum principle. 
Clearly, this method does not work anymore as soon as these estimates do not hold (cf. applications below). Thus, we aim to give a more general and also conceptually consistent proof.

To this end, we introduce a third adjoint state \eqref{eq:Second-Level Second-Order Adjoint} that, in particular, dualizes quadratic terms coming from the second order (mixed) Lions derivatives. 
In contrast to the second adjoint equation, which is standard BSDE, the third adjoint equation is a conditional McKean-Vlasov BSDE.
While the second adjoint state can be seen as dualizing the homogeneous part of the first variational equation, the third adjoint equation dualizes the heterogeneous part. Importantly, the third adjoint process, does not appear in the maximum principle, it is only used as a means to get there. It will, however, appear as soon as the sharper estimates from \cite{Buckdahn2016} Proposition 4.3 do no-longer hold.

Our approach will be useful in extensions to the maximum principle to an infinite dimensional setting and the common noise setting.
Firstly, the infinite dimensional setting. The proof of the sharper estimates in \citet{Buckdahn2016} relies on matrix valued SDEs and needs apriori estimates on solutions to these. In infinite dimensions these become operator valued SDEs, which cannot be solved in a general setting. Some progress has been made by \citet{guatteri2025pengsmaximumprinciplestochastic}, but in a simpler setting and relying on solving BSPDEs as evolution equations on the space of Hilbert-Schmidt operators. A difficulty of our approach in the infinite dimensional setting would be the existence of a solution to the third adjoint equation as it becomes an operator valued backward SDE. But this is the same problem that is already present for the second adjoint and ways to solve these equations have been developed \cite{WesselsPengMaximumPrinciple2021,Lu2021}.

The other clear application is the common noise setting \cite{CarmonaDelarue2018II}. Here, it is already known that these sharper estimates of \citet{Buckdahn2016} Proposition 4.3 do not hold, so the corresponding second-order terms have to appear, e.g. compare the different Ito formulas \cite{CarmonaDelarue2018I} Proposition 5.102 and \cite{CarmonaDelarue2018II} Theorem 4.17. The intuition is that in general a conditional expectation does not have the same smoothing properties as a non-conditional expectation. Our method can still be applied in this setting.

The structure of this paper is as follows. In Section 2, we introduce the mathematical setting and the needed assumptions. In Section 3, we introduce the variational processes and give estimates on their order. In Section 4, we introduce the adjoint equations, in particular the third adjoint equation and its needed setting and prove its well-posedness. Afterwards, in Section 5, the needed dualizations of these processes are discussed and then used in Section 6 to expand the cost functional in a suitable way to infer the desired maximum principle.

\section{Preliminaries}
We recall the notion of Lions differentiability. Let $\mathcal{P}_2(\mathbb{R}^d)$ denote the $2$-Wasserstein space (for more detail see \cite{Villani2009}).  
A mapping $\varphi:\mathcal{P}_2(\mathbb{R}^d)\to\mathbb{R}^m$ is said to be Lions differentiable at $\mu$ if there exists a mapping 
$\partial_\mu\varphi(\mu):\mathbb{R}^d\to\mathbb{R}^{m\times d}$ such that, for any $X\sim\mu$, the lifted mapping $\tilde{\varphi}: L^2(\Omega,\cF,\P;\R^d)\mapsto \R^{m\times d},X\mapsto \varphi(\mathcal{L}(X))$ is Fréchet differentiable in $L^2(\Omega,\cF,\P;\R^d)$ and
\begin{equation}
    D\tilde\varphi(X) Y 
    = \mathbb{E}\left[\partial_\mu\varphi(\mu)(X) Y
    \right],
    \qquad \text{for all }Y\in L^2(\Omega,\cF,\P;\R^d).
\end{equation}
For more details we refer to \cite{CarmonaDelarue2018I}.
We now introduce the necessary regularity for the maximum principle as done in \cite{BuckdahnPeng2017} Definition 2.1 (also compare \cite{Buckdahn2016} Definition 2.1 and \cite{CarmonaDelarue2018I} Chapter 5.6).
\begin{definition}
    We say that $\varphi \in C_b^{1,1}(\mathcal{P}_2(\mathbb{R}^d),\R^m)$,
    if there exists for all $\vartheta \in L^2(\Omega, \mathbb{R}^d)$ a $\cL(\vartheta)$-modification of $\partial_\mu \varphi(\cL(\vartheta))$, again denoted by $\partial_\mu \varphi(\cL(\vartheta))$, such that $\partial_\mu \varphi: \mathcal{P}_2(\mathbb{R}^d) \times \mathbb{R}^d \to \mathbb{R}^{m\times d}$ is bounded and Lipschitz continuous, i.e. there is some $C>0$ such that
    \begin{enumerate}[(i)]
        \item $|\partial_\mu \varphi(\mu)(y)| \leq C$, for all $ \mu \in \mathcal{P}_2(\mathbb{R}^d)$ and $ y \in \mathbb{R}^d$, and
        \item $|\partial_\mu \varphi(\mu)(y)-\partial_\mu \varphi(\mu^{\prime})(y^{\prime})| \leq C(W_2(\mu, \mu^{\prime})+|y-y^{\prime}|)$, for all $ \mu, \mu^{\prime} \in \mathcal{P}_2(\mathbb{R}^d)$ and $ y,y^{\prime} \in \mathbb{R}^d$.
    \end{enumerate}
\end{definition}
This version of the derivative is unique (cf. \cite{BuckdahnPeng2017} Remark 2.1) and we will always be using this version. Note that for $(\mu,y)\mapsto \partial_\mu\varphi(\mu)(y)$ we now have two different partial derivatives $\partial_{y}\partial_\mu \varphi$ and $\partial_{\mu}\partial_\mu \varphi$. We give further regularity definitions and a second-order Taylor expansion in the measure argument, which will be needed later. 
\begin{definition}
    We say that $\varphi \in C_b^{2,1}(\mathcal{P}_2(\mathbb{R}^d),\R^m)$, if $\varphi \in C_b^{1,1}(\mathcal{P}_2(\mathbb{R}^d),\R^m)$,
    \begin{enumerate}[(i)]
        \item $\partial_\mu \varphi(\cdot)( y) \in C_b^{1,1}(\mathcal{P}_2(\mathbb{R}^d),\R^{m\times d})$, for all $y \in \mathbb{R}^d$, and $\partial_\mu^2 \varphi$ : $\mathcal{P}_2(\mathbb{R}^d) \times \mathbb{R}^d \times \mathbb{R}^d \to \mathbb{R}^{m\times d\times d}$ is bounded and Lipschitz-continuous and
        \item $\partial_\mu \varphi(\mu): \mathbb{R}^d \to \mathbb{R}^{m\times d}$ is differentiable, for every $\mu \in \mathcal{P}_2(\mathbb{R}^d)$, and its derivative $\partial_y \partial_\mu \varphi: \mathcal{P}_2(\mathbb{R}^d) \times \mathbb{R}^d \to \mathbb{R}^{d\times d\times d}$ is bounded and Lipschitz-continuous.
    \end{enumerate}
\end{definition}
\begin{lemma}[\cite{BuckdahnPeng2017} Lemma 2.1] \label{lemma:Taylor formula for Lions Derivative}
    Let $\varphi \in C_b^{2,1}(\mathcal{P}_2(\mathbb{R}^d),\R)$. Then, for any given $\vartheta_0 \in L^2(\Omega, \mathbb{R}^d)$ we have the following second-order expansion, for all $\vartheta\in L^2(\Omega, \mathbb{R}^d)$
    \begin{equation*}
        \begin{aligned}
            \varphi(\cL(\vartheta))-  \varphi(\cL(\vartheta_0))
            = & \E[\partial_\mu \varphi(\cL(\vartheta_0))( \vartheta_0) \cdot \eta]+\frac{1}{2} \E[\tilde{\E}[\operatorname{tr}(\partial_\mu^2 \varphi(\cL(\vartheta_0),)(\tilde{\vartheta_0}, \vartheta_0) \cdot \tilde{\eta} \otimes \eta)]] \\
            & +\frac{1}{2} \E[\operatorname{tr}(\partial_y \partial_\mu \varphi(\cL(\vartheta_0))( \vartheta_0) \cdot \eta \otimes \eta)]+R(\cL(\vartheta), \cL(\vartheta_0)),
        \end{aligned}
    \end{equation*}
    where $\eta:=\vartheta-\vartheta_0$, and for all $\vartheta \in L^2(\Omega, \mathbb{R}^d)$ the remainder $R(\cL(\vartheta), \cL(\vartheta_0))$ satisfies the estimate
    \begin{equation*}
        \begin{aligned}
            |R(\cL(\vartheta), \cL(\vartheta_0))| & \leq C((\E[|\vartheta-\vartheta_0|^2])^{3 / 2}+\E[|\vartheta-\vartheta_0|^3])
            \leq C \E[|\vartheta-\vartheta_0|^3] .
        \end{aligned}
    \end{equation*}
    The constant $C \in \mathbb{R}_{+}$only depends on the Lipschitz constants of $\partial_\mu^2 f$ and $\partial_y \partial_\mu \varphi$.
\end{lemma}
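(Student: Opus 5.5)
We prove the expansion by lifting $\varphi$ to $L^2$ and applying a two-step Taylor expansion along the segment $\vartheta_s := \vartheta_0 + s\eta$, $s\in[0,1]$. Define $\tilde\varphi(Y)=\varphi(\cL(Y))$. By the definition of Lions differentiability and the $C_b^{1,1}$ regularity, the map $s\mapsto \varphi(\cL(\vartheta_s))$ is $C^1$ with derivative $\E[\partial_\mu\varphi(\cL(\vartheta_s))(\vartheta_s)\cdot\eta]$. The fundamental theorem of calculus then gives
\begin{equation*}
\varphi(\cL(\vartheta))-\varphi(\cL(\vartheta_0)) = \int_0^1 \E\big[\partial_\mu\varphi(\cL(\vartheta_s))(\vartheta_s)\cdot\eta\big]\,ds .
\end{equation*}

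Next we expand the integrand around $s=0$. We split the difference $\partial_\mu\varphi(\cL(\vartheta_s))(\vartheta_s)-\partial_\mu\varphi(\cL(\vartheta_0))(\vartheta_0)$ into two pieces, each of which carries only one type of variation.
\begin{itemize}
\item[(a)] The first piece, $\partial_\mu\varphi(\cL(\vartheta_s))(\vartheta_s)-\partial_\mu\varphi(\cL(\vartheta_s))(\vartheta_0)$, varies only the spatial argument. By (ii) of the $C_b^{2,1}$ definition it equals $\int_0^1 \partial_y\partial_\mu\varphi(\cL(\vartheta_s))(\vartheta_0+rs\eta)\,s\eta\,dr$.
\item[(b)] The second piece, $\partial_\mu\varphi(\cL(\vartheta_s))(y)-\partial_\mu\varphi(\cL(\vartheta_0))(y)$ at fixed $y=\vartheta_0(\omega)$, varies only the measure. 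Since $\partial_\mu\varphi(\cdot)(y)\in C_b^{1,1}$, we apply the first-order formula from the first step to this function with $y$ frozen. Using an independent copy $(\tilde\vartheta_0,\tilde\eta)$ on $(\tilde\Omega,\tilde\P)$, it equals $\int_0^1\tilde\E[\partial_\mu^2\varphi(\cL(\vartheta_{rs}))(\tilde\vartheta_{rs},y)\,s\tilde\eta]\,dr$.
\end{itemize}

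We then freeze the second derivatives at $(\cL(\vartheta_0),\vartheta_0)$. Integrating $\int_0^1 s\,ds=\tfrac12$ produces exactly the two quadratic terms $\tfrac12\E[\operatorname{tr}(\partial_y\partial_\mu\varphi\,\eta\otimes\eta)]$ and $\tfrac12\E\tilde\E[\operatorname{tr}(\partial_\mu^2\varphi\,\tilde\eta\otimes\eta)]$. The first-order term is $\E[\partial_\mu\varphi(\cL(\vartheta_0))(\vartheta_0)\cdot\eta]$. The remainder $R$ collects the errors from freezing the second derivatives.

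Controlling this remainder is the main step. We use the Lipschitz continuity of $\partial_y\partial_\mu\varphi$ and $\partial_\mu^2\varphi$ together with $W_2(\cL(\vartheta_s),\cL(\vartheta_0))\le s(\E|\eta|^2)^{1/2}$. This bounds the error of (a) by
\begin{equation*}
C\,\E\big[\big((\E|\eta|^2)^{1/2}+|\eta|\big)|\eta|^2\big],
\end{equation*}
and the error of (b) by
\begin{equation*}
C\,\E\tilde\E\big[\big((\E|\eta|^2)^{1/2}+|\tilde\eta|+|\eta|\big)|\tilde\eta||\eta|\big].
\end{equation*}
To reduce these to the claimed form we use independence, which gives $\tilde\E|\tilde\eta|=\E|\eta|$ and $\tilde\E|\tilde\eta|^2=\E|\eta|^2$. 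Combined with Cauchy–Schwarz and Hölder, this bounds every product by $(\E|\eta|^2)^{3/2}+\E|\eta|^3$. For instance, $\E|\eta|^2\,\E|\eta|\le(\E|\eta|^2)^{3/2}$ and $\E|\eta|^2\,(\E|\eta|^2)^{1/2}=(\E|\eta|^2)^{3/2}$. Jensen's inequality finally gives $(\E|\eta|^2)^{3/2}\le\E|\eta|^3$.

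The delicate points are measurability and joint use of the product space in step (b). There one must justify applying the first-order formula pointwise in $y$ and then substituting $y=\vartheta_0(\omega)$. This is legitimate because the $C_b^{1,1}$ version of the derivative is jointly Lipschitz, hence jointly measurable, and all integrands are bounded times $|\eta|$ or $|\tilde\eta|$, so Fubini applies.
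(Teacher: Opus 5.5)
The paper gives no proof of this lemma. It is quoted from \cite{BuckdahnPeng2017} (Lemma 2.1), so there is no argument in the text to compare yours against.

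Your proposal is a correct, self-contained proof along the standard lines. You apply the fundamental theorem of calculus to the lift along $\vartheta_s=\vartheta_0+s\eta$, then apply it again to the increment of $\partial_\mu\varphi$. You then freeze the second derivatives. Finally, you close with $W_2(\cL(\vartheta_s),\cL(\vartheta_0))\le s(\E|\eta|^2)^{1/2}$ and use the independence of $\tilde\eta$ and $\eta$ to factor the cross moments into $(\E|\eta|^2)^{3/2}$.

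Splitting the increment into a pure spatial part (a) and a pure measure part (b) is a sound choice. It uses only the separate derivatives $\partial_y\partial_\mu\varphi$ (at fixed measure) and $\partial_\mu^2\varphi$ (at fixed $y$), which is exactly what the $C_b^{2,1}$ definition provides. You therefore never need a joint chain rule for $(\mu,y)\mapsto\partial_\mu\varphi(\mu)(y)$. It also makes clear why $C$ depends only on the Lipschitz constants of these two second derivatives.

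Two cosmetic points:
\begin{itemize}
\item In (b) the spatial argument is frozen at $\vartheta_0$, so the $|\eta|$ inside the Lipschitz factor of that error is superfluous. It is harmless as an upper bound.
\item The last step $C((\E|\eta|^2)^{3/2}+\E|\eta|^3)\le C\,\E|\eta|^3$ holds only after replacing $C$ by $2C$.
\end{itemize}
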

\begin{remark}
    Note that the above Lemma gives a second-order Fréchet derivative of the lift to the space $L^3(\Omega,\R^d)$ (assuming one has sufficient integrability of the corresponding laws), as the remainder has sufficient order in $L^3$. This relates to the classical problem that the assumption that the lift $\tilde{\varphi}:L^2(\Omega,\R^d)\to \R$ is two times Fréchet differentiable (we do not assume this!) is very strong (cf. \cite{CarmonaDelarue2018I} Example 5.80).
\end{remark}
For our coefficients we will need the following regularity.
\begin{definition}
    For $\varphi:\R^d\times\cP_2(\R^d)\to \R^m $ we say $\varphi\in C_b^{2,1}(\R^d\times\cP_2(\R^d),\R^m)$ if
    \begin{enumerate}[(i)]
        \item $\varphi(\cdot ,\mu)\in C^2_b(\R^d,\R^m)$ for all $\mu\in\cP_2(\R^d)$,
        \item $\varphi(x,\cdot)\in C^{2,1}_b(\cP_2(\R^d),\R^m)$ and $\partial_x\varphi(x,\cdot)\in C^{1,1}_b(\cP_2(\R^d),\R^{m\times d})$ for all $x\in\R^d$,
        \item $\partial_\mu\varphi(\cdot ,\mu)(\cdot)\in C^1_b(\R^d\times \R^d,\R^{m\times d})$ for all $\mu\in\cP_2(\R^d)$ and
        \item $\varphi$ and all first and second-order derivatives of $\varphi$ are bounded and Lipschitz continuous in all variables.
    \end{enumerate}
\end{definition}
We will use shorthand notations for our derivatives. For $\varphi \in C_b^{2,1}(\R^d\times\cP_2(\R^d),\R^m)$ define $\varphi_x:=\partial_x\varphi$, $\varphi_\mu:=\partial_\mu \varphi$, $\varphi_{xx}:=\partial_x\partial_x\varphi$. Further, for $(x,\mu,y)\mapsto \partial_\mu\varphi(x,\mu)(y)$, we denote $\varphi_{x\mu}:=\partial_{x}\partial_\mu \varphi$, $\varphi_{y\mu}:=\partial_{y}\partial_\mu \varphi$ and $\varphi_{\mu\mu}:=\partial_{\mu}\partial_\mu \varphi$.
\begin{assumption}\label{Assumption:C^{2,1}}
    The coefficients $A,B,f,g$ are measurable in all variables and for all $t\in[0,T]$ and $u\in U$ 
    \begin{enumerate}[(i)]
        \item $A(t,\cdot,\cdot,u)\in C_b^{2,1}(\R^d\times\cP_2(\R^d),\R^d)$,
        \item $B(t,\cdot,\cdot,u)\in C_b^{2,1}(\R^d\times\cP_2(\R^d),\R^{d\times d})$,
        \item $f(t,\cdot,\cdot,u)\in C_b^{2,1}(\R^d\times\cP_2(\R^d),\R)$ and 
        \item $g\in C_b^{2,1}(\R^d\times\cP_2(\R^d),\R)$,
    \end{enumerate}
    with Lipschitz and boundedness constants uniform in $t$ and $u$.
\end{assumption}
Note that under these assumptions the solution to the state equation \eqref{eq:state} exists and is unique for every $\alpha\in \A$ by standard results \cite{CarmonaDelarue2018I,Sznitman1991}.

For brevity's sake, we will often write $\theta_t:=(t,X_t,\mu_t,\alpha_t)$, where $\alpha$ will be optimal, $X$ the corresponding solution to \eqref{eq:state} and $\mu$ its law. 

Moreover, we will often denote the transpose of a matrix $A\in\R^{d\times d}$ as its dual operator $A^*=A^\top$ and use the tensor product $x\otimes y:=xy^\top$ for $x,y\in \R^d$.

Further, we will be interested in the order of processes with respect to some parameter. For this we introduce the $\mathcal{O}$-notation: For mappings $f:(0,\infty)\to \R^m$, $g:(0,\infty)\to \R$, we write $f\in O(g(\epsilon))$ if $\limsup_{\epsilon\to 0} \left|\frac{f(\epsilon)}{g(\epsilon)}\right|< \infty$ and $f\in o(g(\epsilon))$ if $\lim_{\epsilon\to 0} \left|\frac{f(\epsilon)}{g(\epsilon)}\right|=0$.

\section{The Variational Equations}

Let $\alpha\in \A$ be an optimal control with corresponding state process $X$. For a Borel subset $E_\epsilon\subset [0,T]$ with Lebesgue measure $|E_\epsilon|=\epsilon>0$ and another admissible control $\beta\in \A$ define the spike variation
\begin{equation*}
    \alpha_t^{\epsilon} :=\begin{cases}
    \beta_t,&\text{for } t \in E_{\epsilon}, \\
    \alpha_t,&\text{for }  t \in E_{\epsilon}^c.
    \end{cases}
\end{equation*}
We denote $X^\epsilon$ the solution to the state equation \eqref{eq:state} with $\alpha^\epsilon$ plugged in, $\mu^\epsilon$ the corresponding distribution and $\Delta X = X^\epsilon-X$. Further, we denote $\delta A(t) = A(t,X_t,\mu_t,\alpha^\epsilon_t)-A(t,X_t,\mu_t,\alpha_t)$ and similarly for any other mapping. Then, we define the first variational process $Y^\epsilon$ as a solution to
\begin{equation}
    \begin{aligned}
        dY^\epsilon_t 
        =& \left( A_x(\theta_t)Y^\epsilon_t +\tilde{\E}\left[A_\mu(\theta_t)(\tilde{X}_t) \tilde{Y}^\epsilon_t\right] 
        + \delta A(t) \right) \dt  \\
        &+ \left( B_x(\theta_t)Y^\epsilon_t + \tilde{\E}\left[B_\mu(\theta_t)(\tilde{X}_t) \tilde{Y}^\epsilon_t\right] 
        + \delta B(t) \right) \dW_t  \\
        Y^\epsilon_0 =& 0,
    \end{aligned}\label{eq:first variational process SDE}
\end{equation}
and the second variational process $Z^\epsilon$ as a solution to
\begin{equation}
    \begin{aligned}
        dZ^\epsilon_t 
        = & \Big( A_x(\theta_t)Z^\epsilon_t +\tilde{\E}\left[A_\mu(\theta_t)(\tilde{X}_t) \tilde{Z}^\epsilon_t\right] + \frac{1}{2} A_{xx}(\theta_t)[Y^\epsilon_t,Y^\epsilon_t] \\
        &+ \frac{1}{2}\tilde{\E}\left[ A_{y\mu}(\theta_t)(\tilde{X}_t)[\tilde{Y}^\epsilon_t,\tilde{Y}^\epsilon_t]\right]
        + \frac{1}{2}\tilde{\E} \left[\tilde{\tilde\E}\left[ A_{\mu\mu}(\theta_t)(\tilde{X}_t,\tilde{\tilde{X}}_t)[\tilde{Y}^\epsilon_t,\tilde{\tilde{Y}}^\epsilon_t]\right]\right] \\
        &+\tilde{\E}\left[ A_{x\mu}(\theta_t)(\tilde{X}_t)[\tilde{Y}^\epsilon_t,Y^\epsilon_t]\right]
        + \delta A_x(t) Y^\epsilon_t+\tilde{\E}\left[\delta A_\mu(t)(\tilde{X}_t) \tilde{Y}^\epsilon_t\right]  \Big)\dt  \\
        &+\Big( B_x(\theta_t)Z^\epsilon_t +\tilde{\E}\left[B_\mu(\theta_t)(\tilde{X}_t) \tilde{Z}^\epsilon_t\right] 
        + \frac{1}{2} B_{xx}(\theta_t)[Y^\epsilon_t,Y^\epsilon_t] \\
        &+ \frac{1}{2}\tilde{\E}\left[ B_{y\mu}(\theta_t)(\tilde{X}_t)[\tilde{Y}^\epsilon_t,\tilde{Y}^\epsilon_t]\right]
        +\frac{1}{2}\tilde{\E} \left[\tilde{\tilde\E}\left[ B_{\mu\mu}(\theta_t)(\tilde{X}_t,\tilde{\tilde{X}}_t)[\tilde{Y}^\epsilon_t,\tilde{\tilde{Y}}^\epsilon_t]\right]\right] \\
        &+\tilde{\E}\left[ B_{x\mu}(t)(\tilde{X}_t)[\tilde{Y}^\epsilon_t,Y^\epsilon_t]\right]
        + \delta B_x(t) Y^\epsilon_t+\tilde{\E}\left[\delta B_\mu(t)(\tilde{X}_t) \tilde{Y}^\epsilon_t\right]\Big)  \dW_t\\
        Z^\epsilon_0 =& 0. 
    \end{aligned}\label{eq:second variational process SDE}
\end{equation}
\begin{remark}
    Note that these solutions exist and are unique by Assumption \ref{Assumption:C^{2,1}} and standard results (cf. \cite{CarmonaDelarue2018I,Sznitman1991}). Even though \eqref{eq:first variational process SDE} and \eqref{eq:second variational process SDE} are not true McKean-Vlasov SDEs if looked at in a  stand-alone sense as they depend on the joint distribution of $(X,Y)$ and $(X,Z)$ which is stronger than the distribution of $Y$ and $Z$ respectively, one can still use standard theory by looking at the corresponding equation of $(X,Y,Z)$ on $\R^{3d}$ as a whole.
\end{remark}
We get the following estimates on the order of these processes with respect to $\epsilon$.
\begin{lemma}\label{lemma:Estimates on Y and Z}
    If Assumption \ref{Assumption:C^{2,1}} holds, then, for any $k \geq 1$,
    \begin{align}
        &\mathbb{E}\left[\sup _{t \in[0, T]}\left\|\Delta X_t\right\|^{2 k}\right] \in O(\epsilon^k), \label{eq:DeltaXEstimate}\\
        &\mathbb{E}\left[\sup _{t \in[0, T]}\left\|Y^\epsilon_t\right\|^{2 k}\right] \in O(\epsilon^k), \label{eq:FirstVariationalEstimate}\\
        &\mathbb{E}\left[\sup _{t \in[0, T]}\left\|Z^\epsilon_t\right\|^{2 k}\right] \in O(\epsilon^{2k}), \label{eq:SecondVariationalEstimate} \\
        &\mathbb{E}\left[\sup _{t \in[0, T]}\left\|\Delta X_t-Y^\epsilon_t\right\|^{2 k}\right] \in O(\epsilon^{2k}). \label{eq:DeltaX minus first Variational Estimate}
    \end{align}
\end{lemma}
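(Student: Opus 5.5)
We prove the four estimates in the order stated, each by the same scheme: write the SDE for the quantity of interest, apply the Burkholder--Davis--Gundy and Hölder inequalities to $\E[\sup_{s\le t}|\cdot|^{2k}]$, and close the argument with Gronwall's lemma. The measure terms are handled throughout by the elementary bound
\[
W_2(\mu^\epsilon_t,\mu_t)^{2k}\le \left(\E[|\Delta X_t|^2]\right)^k\le \E[|\Delta X_t|^{2k}],
\]
and, for the terms with independent copies, by $|\tilde\E[\Phi(\tilde X_t)\tilde Y_t]|^{2k}\le C\,\E[|Y_t|^{2k}]$ when $\Phi$ is bounded. The spike terms $\delta A,\delta B$ vanish outside $E_\epsilon$ and are controlled using the boundedness of the coefficients. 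For example,
\[
\E\Big[\Big(\int_0^T|\delta B(t)|^2\mathbf 1_{E_\epsilon}\dt\Big)^k\Big]\le C\epsilon^k
\quad\text{and}\quad
\E\Big[\Big(\int_0^T|\delta A(t)|\dt\Big)^{2k}\Big]\le C\epsilon^{2k}.
\]
The $\delta A$ term is of higher order than the $\delta B$ term, so the diffusion spike is what fixes the rate.

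For \eqref{eq:DeltaXEstimate} we write
\[
\Delta X_t=\int_0^t\big(A(s,X^\epsilon_s,\mu^\epsilon_s,\alpha^\epsilon_s)-A(s,X_s,\mu_s,\alpha^\epsilon_s)+\delta A(s)\big)\,ds
\]
plus the analogous stochastic integral. The Lipschitz property in $(x,\mu)$ gives the bound $C(|\Delta X_s|+W_2(\mu^\epsilon_s,\mu_s))$, the spike terms give $C\epsilon^k$, and Gronwall's lemma yields $O(\epsilon^k)$. The estimate \eqref{eq:FirstVariationalEstimate} follows identically, since \eqref{eq:first variational process SDE} is linear with bounded coefficients and spike forcing $\delta A,\delta B$. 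For \eqref{eq:SecondVariationalEstimate}, the equation \eqref{eq:second variational process SDE} is linear in $Z^\epsilon$ with bounded coefficients, so it suffices to bound the forcing terms in $L^{2k}$.

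The quadratic forcing terms $A_{xx}[Y,Y]$, $\tilde\E[A_{y\mu}[\tilde Y,\tilde Y]]$, $A_{\mu\mu}$ and $A_{x\mu}$ are all bounded by $C(|Y_t|^2+\E|Y_t|^2)$. Their $2k$-th moments are therefore $O(\epsilon^{2k})$ by \eqref{eq:FirstVariationalEstimate} applied with $2k$ in place of $k$. The mixed spike terms $\delta A_x Y$ and $\delta B_x Y$ are supported on $E_\epsilon$. For the diffusion one we use
\[
\E\Big[\Big(\int_{E_\epsilon}|Y_t|^2\dt\Big)^k\Big]\le \epsilon^k\,\E[\sup_t|Y_t|^{2k}]=O(\epsilon^{2k}),
\]
and the drift version is even smaller. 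Gronwall's lemma then gives $O(\epsilon^{2k})$.

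The main step is \eqref{eq:DeltaX minus first Variational Estimate}. We set $\xi=\Delta X-Y^\epsilon$ and Taylor expand to first order with integral remainder. This gives
\[
A(s,X^\epsilon_s,\mu^\epsilon_s,\alpha^\epsilon_s)-A(s,X_s,\mu_s,\alpha_s)=\delta A(s)+A_x(\theta_s)\Delta X_s+\tilde\E[A_\mu(\theta_s)(\tilde X_s)\Delta\tilde X_s]+R_s,
\]
where $R_s$ contains two kinds of terms. The first are the differences of derivatives evaluated at control $\alpha^\epsilon$ versus $\alpha$, namely $\delta A_x\,\Delta X$ and $\tilde\E[\delta A_\mu\,\Delta\tilde X]$, which are supported on $E_\epsilon$. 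The second are the second-order remainders, bounded by $C(|\Delta X_s|^2+\E|\Delta X_s|^2)$ using the Lipschitz continuity of the first derivatives in $(x,\mu,y)$. The same decomposition holds for $B$. The difference $\xi$ then solves a linear McKean--Vlasov SDE with forcing $R$, and $R$ is estimated exactly as the forcing of $Z^\epsilon$ was, now using \eqref{eq:DeltaXEstimate}. This gives $O(\epsilon^{2k})$. The delicate point is the stochastic integral of the spike remainder $\delta B_x\Delta X$: one must use the support in $E_\epsilon$ to gain the factor $\epsilon^k$ via the bound $\int_{E_\epsilon}|\Delta X|^2\le\epsilon\sup|\Delta X|^2$. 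Equally, one must keep the quadratic remainder in terms of $L^{4k}$ moments of $\Delta X$, rather than the $L^2$ bounds of Lemma \ref{lemma:Taylor formula for Lions Derivative}.
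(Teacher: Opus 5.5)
Your proposal is correct and follows the paper's approach. The paper defers the bounds on $\Delta X$, $Y^\epsilon$ and $\Delta X-Y^\epsilon$ to the standard BDG--Gronwall proof of Proposition 4.2 in Buckdahn et al., and gets the $Z^\epsilon$ bound by Gronwall from the $L^{4k}$ estimate on $Y^\epsilon$; you write these steps out in full, including the use of the deterministic spike set $E_\epsilon$ to extract the factor $\epsilon^k$ from the diffusion terms $\delta B_x Y^\epsilon$ and $\delta B_x\Delta X$. (The terms $\tilde\E[\delta A_\mu(t)(\tilde X_t)\tilde Y^\epsilon_t]$ and $\tilde\E[\delta B_\mu(t)(\tilde X_t)\tilde Y^\epsilon_t]$ in the $Z^\epsilon$ equation, which you do not list, are also supported on $E_\epsilon$ and are handled the same way.)
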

\begin{proof}
     For equations \eqref{eq:DeltaXEstimate}, \eqref{eq:FirstVariationalEstimate}, \eqref{eq:DeltaX minus first Variational Estimate}, the proof is the same as for Proposition 4.2 in \cite{Buckdahn2016} as no new terms appear in the equations of $X$, $X^\epsilon$ and $Y^\epsilon$. The proof of \eqref{eq:SecondVariationalEstimate} is done similarly by a simple Gronwall argument and using \eqref{eq:FirstVariationalEstimate}. 
\end{proof}
\begin{lemma}\label{lemma:estimate on DeltaX-Y-Z}
    If Assumption \ref{Assumption:C^{2,1}} holds, then, for any $ k\geq 1$,
    \begin{equation*}
        \mathbb{E}\left[\sup _{t \in[0, T]}\left|X^{\epsilon}_t-X_t-Y^{\epsilon}_t-Z^{\epsilon}_t\right|^{2 k}\right] \in o(\epsilon^{2k}).
    \end{equation*}
\end{lemma}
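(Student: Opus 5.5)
We set $\xi_t := X^\epsilon_t - X_t - Y^\epsilon_t - Z^\epsilon_t = \Delta X_t - Y^\epsilon_t - Z^\epsilon_t$ and derive a linear McKean--Vlasov SDE for $\xi$ with a small inhomogeneity. We then close the estimate with Burkholder--Davis--Gundy and Gronwall, following the structure of Peng's original argument and of Proposition 4.3 in \cite{Buckdahn2016}.

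\textbf{Step 1: expansion of the coefficients.} We write
\[
A(t,X^\epsilon_t,\mu^\epsilon_t,\alpha^\epsilon_t)-A(t,X_t,\mu_t,\alpha_t) = \delta A(t) + \bigl[A(t,X^\epsilon_t,\mu^\epsilon_t,\alpha^\epsilon_t)-A(t,X_t,\mu_t,\alpha^\epsilon_t)\bigr],
\]
and expand the bracket to second order. In the state variable we use the standard Taylor formula. In the measure variable we use Lemma \ref{lemma:Taylor formula for Lions Derivative} with $\vartheta_0=X_t$, $\vartheta=X^\epsilon_t$ and $\eta=\Delta X_t$, realised on a copy space (tilde notation). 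The mixed term $A_{x\mu}$ comes from expanding $\partial_\mu A(\cdot,\mu)(y)$ in $x$, or equivalently from a joint Taylor expansion along the segment $(X_t+\lambda\Delta X_t,\ \mathcal{L}(X_t+\lambda\Delta X_t))$. The coefficients evaluated at $\alpha^\epsilon_t$ are replaced by those at $\alpha_t$ plus $\delta$-terms. The second-order $\delta$-terms (such as $\delta A_{xx}$) carry the indicator $\mathbf 1_{E_\epsilon}(t)$ times a quantity of order $|\Delta X|^2$.

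The result is
\[
d\xi_t = \Bigl(A_x(\theta_t)\xi_t + \tilde\E\bigl[A_\mu(\theta_t)(\tilde X_t)\tilde\xi_t\bigr] + R^A_t\Bigr)dt + \Bigl(B_x(\theta_t)\xi_t + \tilde\E\bigl[B_\mu(\theta_t)(\tilde X_t)\tilde\xi_t\bigr] + R^B_t\Bigr)dW_t,\qquad \xi_0=0.
\]
The remainders $R^A_t,R^B_t$ collect the following terms:
\begin{enumerate}[(a)]
\item the differences between the quadratic forms in $\Delta X$ and the same forms in $Y^\epsilon$ appearing in \eqref{eq:second variational process SDE}, e.g.\ $A_{xx}[\Delta X,\Delta X]-A_{xx}[Y^\epsilon,Y^\epsilon]$, and likewise for the $y\mu$, $\mu\mu$ and $x\mu$ terms;
\item $\delta A_x(t)(\Delta X_t - Y^\epsilon_t)$ and $\tilde\E[\delta A_\mu(t)(\tilde X_t)(\tilde{\Delta X}_t-\tilde Y^\epsilon_t)]$;
\item $\mathbf 1_{E_\epsilon}(t)\,O(|\Delta X_t|^2 + \E|\Delta X_t|^2)$ from the second-order $\delta$-terms;
\item the Taylor remainders, which are $O(|\Delta X_t|^3 + \E|\Delta X_t|^3)$ by the Lipschitz continuity of second derivatives and Lemma \ref{lemma:Taylor formula for Lions Derivative}.
\end{enumerate}

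\textbf{Step 2: estimating the remainders.} The goal is $\E\bigl[\bigl(\int_0^T |R^A_t|dt\bigr)^{2k}\bigr]+\E\bigl[\bigl(\int_0^T |R^B_t|^2dt\bigr)^{k}\bigr]\in o(\epsilon^{2k})$, using Lemma \ref{lemma:Estimates on Y and Z} and Hölder.
\begin{enumerate}[(a)]
\item Bilinearity gives $|A_{xx}[\Delta X,\Delta X]-A_{xx}[Y^\epsilon,Y^\epsilon]|\le C|\Delta X-Y^\epsilon|(|\Delta X|+|Y^\epsilon|)$, which by \eqref{eq:DeltaXEstimate}, \eqref{eq:FirstVariationalEstimate} and \eqref{eq:DeltaX minus first Variational Estimate} is of order $\epsilon\cdot\epsilon^{1/2}=\epsilon^{3/2}$ in every $L^{2k}$. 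The expectation terms are handled identically, since $\tilde\E$ only improves integrability.
\item Terms of type (b) are $O(\epsilon^{1+})$ pointwise in $L^{2k}$ and moreover supported on $E_\epsilon$.
\item Terms of type (c) are supported on $E_\epsilon$ with size $O(\epsilon)$, so their time integral gains an extra factor $\epsilon$ in the drift; in the diffusion the gain is $\epsilon^{1/2}$ inside $(\int_{E_\epsilon}|\cdot|^2dt)^{1/2}$.
\item Terms of type (d) are $O(\epsilon^{3/2})$.
\end{enumerate}
In every case we obtain $O(\epsilon^{3k})\subset o(\epsilon^{2k})$.

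\textbf{Step 3: closing.} Apply BDG and Gronwall to the linear equation for $\xi$. The mean-field term is controlled by $\tilde\E|\tilde\xi_t|\le (\E|\xi_t|^{2k})^{1/2k}$, using that $A_\mu,B_\mu$ are bounded. This gives $\E[\sup_t|\xi_t|^{2k}]\le C\,\E\bigl[(\int_0^T|R^A_t|dt)^{2k}+(\int_0^T|R^B_t|^2dt)^k\bigr]\in o(\epsilon^{2k})$.

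\textbf{Main obstacle.} The delicate point is the bookkeeping in Step 1 for the measure derivatives. We must justify the joint second-order expansion in $(x,\mu)$ producing exactly the $A_{x\mu}$ cross term, since Lemma \ref{lemma:Taylor formula for Lions Derivative} is stated only for functions of $\mu$ alone. We also need the Lions-remainder bound $(\E|\Delta X|^2)^{3/2}+\E|\Delta X|^3$, which is deterministic, to hold uniformly in $t$. The plan is to apply the lemma to $\mu\mapsto A(t,x,\mu,u)$ at frozen $x=X^\epsilon_t$, then expand the resulting first- and second-order coefficients in $x$ around $X_t$ using the Lipschitz bounds in Assumption \ref{Assumption:C^{2,1}}. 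The leftover terms are cubic, or are products of $\Delta X$ with $\E|\Delta X|^2$, and so are again $O(\epsilon^{3/2})$.
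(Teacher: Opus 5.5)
Your proposal is correct, but it organizes the expansion differently from the paper.

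\textbf{Your route.} You Taylor-expand around the base point $(X_t,\mu_t)$ with explicit remainders, in three steps:
\begin{enumerate}[(i)]
\item the measure increment is treated at the frozen (random) state $x=X^\epsilon_t$ via Lemma~\ref{lemma:Taylor formula for Lions Derivative}, whose constant is uniform in $(t,x,u)$;
\item the state increment is treated at frozen $\mu_t$;
\item re-expanding $A_\mu(t,X^\epsilon_t,\mu_t,\alpha^\epsilon_t)(\tilde X_t)$ in $x$ produces the cross term directly as $\partial_x\partial_\mu A$, exactly the form appearing in \eqref{eq:second variational process SDE}.
\end{enumerate}
Consequently the linear part of your equation for $\xi$ has the same coefficients $A_x(\theta_t)$, $A_\mu(\theta_t)$ as the equations for $Y^\epsilon$ and $Z^\epsilon$. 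Every inhomogeneity is explicit in $\Delta X$, $Y^\epsilon$ and $\Delta X-Y^\epsilon$, so Lemma~\ref{lemma:Estimates on Y and Z} closes the argument with the explicit rate $O(\epsilon^{3k})$.

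\textbf{The paper's route.} The paper instead writes exact integral mean-value identities along the joint interpolation $\theta^{\lambda,\epsilon}_t$ and iterates once. It keeps $K^\epsilon$ inside the Gronwall part, with coefficients at interpolated points. It also keeps $K^\epsilon$ inside cross terms such as $A_{xx}[K^\epsilon,Y^\epsilon+Z^\epsilon]$, which are controlled through the a priori bound \eqref{eq:Apriori Estimate on K}. Finally, it needs Schwarz' theorem to identify $\partial_\mu\partial_x$ with $\partial_x\partial_\mu$.

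\textbf{What each buys.} Your route dispenses with Schwarz' theorem and with the a priori bound on $K^\epsilon$, and it yields a quantitative rate. The paper's route works with exact identities rather than a remainder lemma and follows Peng's template. That template would adapt, via dominated convergence, to second derivatives that are merely continuous, where Lemma~\ref{lemma:Taylor formula for Lions Derivative} in its stated form is not available.

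\textbf{A minor remark.} You cite Proposition~4.3 of \cite{Buckdahn2016} as a model, but those are precisely the sharper estimates this paper sets out to avoid. Your argument, like the paper's, correctly uses only the crude bounds of Lemma~\ref{lemma:Estimates on Y and Z}; the corresponding step in that reference is Lemma~5.2.
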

\begin{proof}
    For notational convenience denote 
    $K^\epsilon_t:=X^{\epsilon}_t-X_t-Y^{\epsilon}_t-Z^{\epsilon}_t$. Clearly, by our previous estimates from Lemma \ref{lemma:Estimates on Y and Z}
    \begin{equation}
        \E\left[\sup_{t\in[0,T]}\|K^\epsilon_t\|^{2k}\right]\in O(\epsilon^{2k}).\label{eq:Apriori Estimate on K}
    \end{equation}
    Defining
    \begin{equation}\begin{aligned}
        K^{(1)}_t
        =& A(t,X^\epsilon_t,\mu^\epsilon_t,\alpha^\epsilon_t)
        -A(t,X_t,\mu_t,\alpha^\epsilon_t)
        - A_x(\theta_t)[Y^\epsilon_t+Z^\epsilon_t] \\
        &-\tilde{\E}\left[A_\mu(\theta_t)(\tilde{X}_t) [\tilde{Y}^\epsilon_t+\tilde{Z}^\epsilon_t]\right] 
        - \frac{1}{2} A_{xx}(\theta_t)[Y^\epsilon_t,Y^\epsilon_t] \\
        &- \frac{1}{2}\tilde{\E}\left[ A_{y\mu}(\theta_t)(\tilde{X}_t)[\tilde{Y}^\epsilon_t,\tilde{Y}^\epsilon_t]\right] 
        - \frac{1}{2}\tilde{\E} \left[\tilde{\tilde\E}\left[ A_{\mu\mu}(\theta_t)(\tilde{X}_t,\tilde{\tilde{X}}_t)[\tilde{Y}^\epsilon_t,\tilde{\tilde{Y}}^\epsilon_t]\right]\right] \\
        &-\tilde{\E}\left[ A_{x\mu}(\theta_t)(\tilde{X}_t)[\tilde{Y}^\epsilon_t,Y^\epsilon_t]\right]
        - \delta A_x(t) Y^\epsilon_t
        -\tilde{\E}\left[\delta A_\mu(t)(\tilde{X}_t) \tilde{Y}^\epsilon_t\right]  
    \end{aligned}\label{eq:Drift of K}\end{equation}
    and $K^{(2)}$ the corresponding terms for $A$ replaced by $B$, we can write
    \begin{equation*}
        dK^\epsilon_t= K^{(1)}_t \dt + K^{(2)}_t \dW_t.
    \end{equation*}
    Using the notation $X^{\lambda, \epsilon}:=X+\lambda\left(X^\epsilon-X\right)$ and $\theta^{\lambda,\epsilon}=(t,X^{\lambda,\epsilon},\cL(X^{\lambda,\epsilon}),\alpha^\epsilon)$, we notice
    \begin{equation}\begin{aligned}
        &A(t,X^\epsilon_t,\mu^\epsilon_t,\alpha^\epsilon_t)
        -A(t,X_t,\mu_t,\alpha^\epsilon_t)
        - A_x(\theta_t)[Y^\epsilon_t+Z^\epsilon_t] 
        -\tilde{\E}[A_\mu(\theta_t)(\tilde{X}_t) [\tilde{Y}^\epsilon_t+\tilde{Z}^\epsilon_t]]\\
        =&\int_0^1 A_x(\theta^{\lambda,\epsilon}_t)[K^\epsilon_t]
        +\tilde{\E}[A_\mu(\theta^{\lambda,\epsilon}_t)(\tilde{X}^{\lambda,\epsilon}_t)[\tilde{K}^\epsilon_t]]
        +(A_x(\theta^{\lambda,\epsilon}_t)-A_x(\theta_t))[Y^\epsilon_t+Z^\epsilon_t]\\
        &+\tilde{\E}[(A_\mu(\theta^{\lambda,\epsilon}_t)(\tilde{X}^{\lambda,\epsilon}_t)-A_\mu(\theta_t)(\tilde{X}_t))[\tilde{Y}^\epsilon_t+\tilde{Z}^\epsilon_t]]\dlambda.
    \end{aligned}\label{eq:first expansion of drift of K}\end{equation}
    Expanding further, we get
    \begin{equation}\begin{aligned}
        &A_x(\theta^{\lambda,\epsilon}_t)-A_x(\theta_t)\\
        =& \lambda \int_0^1 A_{xx}(\theta^{\lambda\gamma,\epsilon}_t)[K^\epsilon_t]
        +\tilde{\E}\left[A_{\mu x}(\theta^{\lambda\gamma,\epsilon}_t)(\tilde{X}^{\lambda\gamma,\epsilon}_t)[\tilde{K}^\epsilon_t]\right]\dgamma +\delta A_x(t)\\
        &+\lambda \int_0^1 A_{xx}(\theta^{\lambda\gamma,\epsilon}_t)[Y^\epsilon_t+Z^\epsilon_t]
        +\tilde{\E}\left[A_{\mu x}(\theta^{\lambda\gamma,\epsilon}_t)(\tilde{X}^{\lambda\gamma,\epsilon}_t)[\tilde{Y}^\epsilon_t+\tilde{Z}^\epsilon_t]\right]\dgamma,
    \end{aligned}\label{eq:second expansion of drift of K in x}\end{equation}
    and
    \begin{equation}\begin{aligned}
        &A_\mu(\theta^{\lambda,\epsilon}_t)(\tilde{X}^{\lambda,\epsilon}_t)-A_\mu(\theta_t)(\tilde{X}_t)\\
        =& \lambda \int_0^1 \tilde{\tilde{\E}}\left[A_{\mu\mu}(\theta^{\lambda\gamma,\epsilon}_t)(\tilde{X}^{\lambda\gamma,\epsilon}_t,\tilde{\tilde{X}}^{\lambda\gamma,\epsilon}_t)[\tilde{\tilde{K}}^\epsilon_t]\right]
        +A_{x\mu}(\theta^{\lambda\gamma,\epsilon}_t)(\tilde{X}^{\lambda\gamma,\epsilon}_t)[K^\epsilon_t]\\
        &+A_{y\mu}(\theta^{\lambda\gamma,\epsilon}_t)(\tilde{X}^{\lambda\gamma,\epsilon}_t)[\tilde{K}^\epsilon_t]
        +\tilde{\tilde{\E}}\left[A_{\mu\mu}(\theta^{\lambda\gamma,\epsilon}_t)(\tilde{X}^{\lambda\gamma,\epsilon}_t,\tilde{\tilde{X}}^{\lambda\gamma,\epsilon}_t)[\tilde{\tilde{Y}}^\epsilon_t+\tilde{\tilde{Z}}^\epsilon_t]\right]\\
        &+A_{x\mu}(\theta^{\lambda\gamma,\epsilon}_t)(\tilde{X}^{\lambda\gamma,\epsilon}_t)[Y^\epsilon_t+Z^\epsilon_t]
        +A_{y\mu}(\theta^{\lambda\gamma,\epsilon}_t)(\tilde{X}^{\lambda\gamma,\epsilon}_t)[\tilde{Y}^\epsilon_t+\tilde{Z}^\epsilon_t]\dgamma 
        +\delta A_\mu(t)(\tilde{X}_t).
    \end{aligned}\label{eq:second expansion of drift of K in mu}\end{equation}
    Plugging \eqref{eq:second expansion of drift of K in x} and \eqref{eq:second expansion of drift of K in mu} into \eqref{eq:first expansion of drift of K} and the resulting terms into \eqref{eq:Drift of K} and using Schwarz' theorem to get $\partial_\mu\partial_x=\partial_x\partial_\mu$ (cf. \cite{CarmonaDelarue2018II} Remark 4.16), results in
    \begin{align}
        &K^{(1)}_t\nonumber\\
        &=\int_0^1 A_x(\theta^{\lambda,\epsilon}_t)[K^\epsilon_t]
        +\tilde{\E}[A_\mu(\theta^{\lambda,\epsilon}_t)(\tilde{X}^{\lambda,\epsilon}_t)[\tilde{K}^\epsilon_t]]\dlambda \label{eq:K:Gronwall terms}\\
        &+ \delta A_x(t) Z^\epsilon_t
        +\tilde{\E}\left[\delta A_\mu(t)(\tilde{X}_t) \tilde{Z}^\epsilon_t\right]\label{eq:K:delta terms}\\
        &+\int_0^1\lambda\int_0^1 A_{xx}(\theta^{\lambda\gamma,\epsilon}_t)[K^\epsilon_t,Y^\epsilon_t+Z^\epsilon_t]\label{eq:K:Higher order terms 1.1}\\
        &+\tilde{\E}\bigg[\tilde{\tilde{\E}}\left[A_{\mu\mu}(\theta^{\lambda\gamma,\epsilon}_t)(\tilde{X}^{\lambda\gamma,\epsilon}_t,\tilde{\tilde{X}}^{\lambda\gamma,\epsilon}_t)[\tilde{\tilde{K}}^\epsilon_t,\tilde{Y}^\epsilon_t+\tilde{Z}^\epsilon_t]\right]\label{eq:K:Higher order terms 1.2}\\
        &+2A_{x\mu}(\theta^{\lambda\gamma,\epsilon}_t)(\tilde{X}^{\lambda\gamma,\epsilon}_t)[\tilde{K}^\epsilon_t,Y^\epsilon_t+Z^\epsilon_t]
        +A_{y\mu}(\theta^{\lambda\gamma,\epsilon}_t)(\tilde{X}^{\lambda\gamma,\epsilon}_t)[\tilde{K}^\epsilon_t,\tilde{Y}^\epsilon_t+\tilde{Z}^\epsilon_t]\bigg]\dgamma\dlambda \label{eq:K:Higher order terms 2}\\
        &+\int_0^1\lambda\int_0^1  A_{xx}(\theta^{\lambda\gamma,\epsilon}_t)[Y^\epsilon_t+Z^\epsilon_t,Y^\epsilon_t+Z^\epsilon_t]
        -A_{xx}(\theta_t)[Y^\epsilon_t,Y^\epsilon_t]\label{eq:K:Y^2 terms 1}\\
        &+2\tilde{\E}\left[A_{x\mu}(\theta^{\lambda\gamma,\epsilon}_t)(\tilde{X}^{\lambda\gamma,\epsilon}_t)[\tilde{Y}^\epsilon_t+\tilde{Z}^\epsilon_t,Y^\epsilon_t+Z^\epsilon_t]
        -A_{x\mu}(\theta_t)(\tilde{X}_t)[\tilde{Y}^\epsilon_t,Y^\epsilon_t]\right]\label{eq:K:Y^2 terms 2}\\
        &+\tilde{\E}\Big[\tilde{\tilde{\E}}\big[
        A_{\mu\mu}(\theta^{\lambda\gamma,\epsilon}_t)(\tilde{X}^{\lambda\gamma,\epsilon}_t,\tilde{\tilde{X}}^{\lambda\gamma,\epsilon}_t)[\tilde{\tilde{Y}}^\epsilon_t+\tilde{\tilde{Z}}^\epsilon_t,\tilde{Y}^\epsilon_t+\tilde{Z}^\epsilon_t]
        -A_{\mu\mu}(\theta_t)(\tilde{X}_t,\tilde{\tilde{X}}_t)[\tilde{Y}^\epsilon_t,\tilde{\tilde{Y}}^\epsilon_t]\big]\Big]\label{eq:K:Y^2 terms 3}\\
        &+\tilde{\E}\big[ A_{y\mu}(\theta^{\lambda\gamma,\epsilon}_t)(\tilde{X}^{\lambda\gamma,\epsilon}_t)[\tilde{Y}^\epsilon_t+\tilde{Z}^\epsilon_t,\tilde{Y}^\epsilon_t+\tilde{Z}^\epsilon_t]
        -A_{y\mu}(\theta_t)(\tilde{X}_t)[\tilde{Y}^\epsilon_t,\tilde{Y}^\epsilon_t]\big]\dgamma\dlambda \label{eq:K:Y^2 terms 4}.
    \end{align}
    For $K^{(2)}$ the corresponding formula with $A$ replaced by $B$ holds. Now, $K$ can be estimated. \eqref{eq:K:Gronwall terms} is treated by a Gronwall argument. \eqref{eq:K:delta terms} is of high enough order by Lemma \ref{lemma:Estimates on Y and Z}. \eqref{eq:K:Higher order terms 1.1},\eqref{eq:K:Higher order terms 1.2} and \eqref{eq:K:Higher order terms 2} are of high enough order by \eqref{eq:Apriori Estimate on K} and Lemma \ref{lemma:Estimates on Y and Z}. For \eqref{eq:K:Y^2 terms 1} the terms containing $Z^\epsilon$ are again of higher order by Lemma \ref{lemma:Estimates on Y and Z}, while for the terms containing $[Y^\epsilon,Y^\epsilon]$ the difference gives the right order by the Lipschitz assumption on $A$ and $B$ from Assumption \ref{Assumption:C^{2,1}} and the estimate \eqref{eq:DeltaXEstimate}. \eqref{eq:K:Y^2 terms 2}, \eqref{eq:K:Y^2 terms 3} and \eqref{eq:K:Y^2 terms 4} are argued in the same way. For further details one can look into the proof of Lemma 5.2 from \cite{Buckdahn2016}, keeping in mind that we do not have any a priori higher order estimates, so we have to treat every second derivative term in the way the $\partial_{xx}$-terms are treated therein.
\end{proof}

\section{The Adjoint Equations}
The first-order adjoint state $(p,q)$ is defined as a solution to
\begin{equation}
    \begin{aligned}
        dp_t =& - \Big(A^*_x(\theta_t) p_t + B^*_x(\theta_t) q_t +f_x(\theta_t)\\
        &+ \tilde{\E}\left[ A^*_\mu(\tilde{\theta}_t)(X_t) \tilde{p}_t 
        + B^*_\mu(\tilde{\theta}_t)(X_t) \tilde{q}_t +f_\mu(\tilde{\theta_t})(X_t)\right]\Big) \dt 
         + q_t \dW_t, \\
        p_T =& g_x(X_T,\mu_T) + \tilde{\E}[g_\mu(\tilde{X}_T,\mu_T)(X_T)]. 
    \end{aligned}\label{eq:First order adjoint}
\end{equation}
Now, let $(e_n)_{n=1}^d$ be basis of $\R^d$ and note that $B_x(\theta_t):\R^d\to \R^{d\times d}$ linearly, i.e. $B_x(\theta_t):\R^d\times \R^d\to \R^d, (y,w) \mapsto B_x(\theta_t)[y,w]$ bilinearly, i.e. $B_x(\theta_t)[\cdot,w]\in \R^{d\times d}$. Denote $B_x^k(t):=B_x(\theta_t)[\cdot,e_k]$ and $Q^k_t:= Q_t e_k \in \R^{d\times d}$. The first-level second-order adjoint state $(P,Q)$ is defined as a solution to
\begin{equation}
    \begin{aligned}
        dP_t =& -\Big(A^*_x(\theta_t) P_t +P_t A_x(\theta_t)
        +\sum_{n}B^k_x(\theta_t)^* P_tB^k_x(\theta_t)
        + \sum_{k}B^k_x(\theta_t)^* Q^k_t \\
        &+ \sum_{n}Q^k_t B^k_x(\theta_t)
        +H_{xx}(\theta_t)+ \tilde{\E}[ H_{y\mu}(\tilde{\theta}_t)(X_t) ] \Big)\dt
        + Q_t \dW_t, \\
        P_T =& g_{xx}(X_T,\mu_T) + \tilde{\E}[g_{y\mu}(\tilde{X}_T,\mu_T)(X_T)]
    \end{aligned}
    \label{eq:First-Level second-order adjoint equation}
\end{equation}
where $H(x, \mu, \alpha, p, q):=A(x, \mu, \alpha) p+ B(x, \mu, \alpha) q-f(x, \mu, \alpha)$.
\begin{remark}
    \begin{enumerate}[(i)]
        \item Note that our first-level second-order adjoint equation differs from the second adjoint equation (3.13) in \cite{Buckdahn2016}. The analogs of the terms $\tilde{\E}[A_\mu(\tilde{\theta}_t)(X_t)]$ and $\tilde{\E}[B_\mu(\tilde{\theta}_t)(X_t)]$ appearing therein will be contained in the second-level second-order adjoint equation \eqref{eq:Second-Level Second-Order Adjoint}, as they require the setup given below.
        \item From this definition, $P_t$ is not necessarily a symmetric matrix. This can be achieved, though, by symmetrizing its equation by replacing
        \begin{align*}
            \tilde{\E}[ H_{y\mu}(\tilde{\theta}_t)(X_t) ]
            \quad&\text{by}\quad 
            \frac{1}{2}\left(\tilde{\E}[ H_{y\mu}(\tilde{\theta}_t)(X_t) ]+\tilde{\E}[ H^*_{y\mu}(\tilde{\theta}_t)(X_t) ]\right)\quad \text{and}\\
            \tilde{\E}[g_{y\mu}(\tilde{X}_T,\mu_T)(X_T)]
            \quad&\text{by}\quad 
            \frac{1}{2}\left(\tilde{\E}[g_{y\mu}(\tilde{X}_T,\mu_T)(X_T)]+\tilde{\E}[g^*_{y\mu}(\tilde{X}_T,\mu_T)(X_T)]\right).
        \end{align*}
        We omit this to avoid even heavier notation.
    \end{enumerate}
\end{remark}

\begin{remark}[Existence and uniqueness of solutions]
    \eqref{eq:First order adjoint} is a McKean-Vlasov BSDE (note that the law dependence is again on $\cL(X,p)$ not $\cL(p)$ alone) and has a unique adapted solution by already known results \cite{CarmonaDelarue2018I}. In comparison \eqref{eq:First-Level second-order adjoint equation} is not of McKean-Vlasov type but a standard BSDE and again has a unique adapted solution by already known results \cite{Pardoux2014}.
\end{remark}
\subsection{The Lifting and the Second-Level Second-Order Adjoint Equation}
    So far, we have worked on $(\Omega, \cF, (\cF_t)_{t\geq 0 }, \P)$ and used another probability space $\tilde{\Omega}$ to generate independent copies that where immediately integrated out, thereby introducing distribution dependence. Here, it was sufficient to work with an abstract probability space that was not further specified. Now, we want to linearize the interaction of two independent copies of $Y^\epsilon$. To be able to do this, we need two independent copies that are not immediately integrated out. Thus, we really need to expand our probability space. We make this explicit to avoid all possible confusion.\\
    We denote $(\hat{\Omega}, \hat{\cF}, (\hat{\cF}_t)_{t\geq 0 }, \hat{\P}):=(\Omega, \cF, (\cF_t)_{t\geq 0 }, \P)$ a copy of the original space and consider
    \begin{equation*}
        (\bar{\Omega}, \bar{\P})
        =(\Omega,  \P)\otimes  (\hat{\Omega},  \hat{\P}),
    \end{equation*}
    equipped with the completed versions $\bar{\cF}, (\bar{\cF_t})_{t\geq 0 }$ of the product $\sigma$-algebras $(\cF, (\cF_t)_{t\geq 0 })\otimes (\hat{\cF}, (\hat{\cF}_t)_{t\geq 0 })$. We will now lift all our previously defined processes to the product space by defining (with an abuse of notation) for all $\bar{\omega}\in\bar{\Omega}$
    \begin{equation*}
        W_t(\bar{\omega}):=W_t(\bar{\omega}_1),\quad X_t(\bar{\omega}):=X_t(\bar{\omega}_1)\quad\dots
    \end{equation*}
    and so forth. The resulting expectation $\bar{\E}$ then has the same effect on the lifting as the original has on the original processes, i.e. $\bar{\E}[X_t]=\E[X_t]$. This lifting allows us to easily obtain independent copies of our process $\hat{W}(\bar{\omega}):=W(\bar{\omega}_2)$, $
    \hat{X}(\bar{\omega}):=X(\bar{\omega}_2),\dots$
    For this reason we will denote $\E[\hat{\E}[\varphi(X_t,\hat{X_t})]]:=\bar{\E}[\varphi(X_t,\hat{X}_t)]$ and use the fact that $\bar{\E}[\varphi(X_t,\hat{X}_t)]=\E[\tilde{\E}[\varphi(X_t,\tilde{X}_t)]]$, where $\tilde{\E}$ denotes the previous independent lifting.\\
    We can now define the second-level second-order adjoint process $(\mathfrak{P},\mathfrak{Q}^{(1)},\mathfrak{Q}^{(2)})$, taking values in $\R^{d\times d}\times\R^{d\times d\times d}\times\R^{d\times d\times d}$, as a solution to
    \begin{equation}
        \begin{aligned}
            \mathfrak{P}_t
            &=-\bigg( 
            A^*_x(\theta_t)\mathfrak{P}_t
            +\tilde{\E}\left[A^*_\mu(\tilde{\theta}_t)(X_t)\tilde{\mathfrak{P}}_t\right]
            +\mathfrak{P}_t A_x(\hat{\theta}_t)
            +\tilde{\E}\left[\tilde{\hat{\mathfrak{P}}}_t A_\mu(\tilde{\theta}_t)(\hat{X}_t)\right]\\
            &+\tilde{\E} \left[H_{\mu\mu}(\tilde{\theta}_t)(X_t,\hat{X}_t)\right]
            +2 H_{x\mu}(\hat{\theta}_t)(X_t)
            + P^* _t A_\mu(\theta_t)(\hat{X}_t)
            + P_t A_\mu(\theta_t)(\hat{X}_t)\\
            & + B^*_x(\theta_t) P_t B_\mu(\theta_t)(\hat{X}_t) 
            + \tilde{\E}[B^*_\mu(\tilde{\theta}_t)(X_t)\tilde{P}_t B_\mu(\tilde{\theta}_t)(\hat{X}_t)]
            + B^*_x(\theta_t) P^*_t B_\mu(\theta_t)(\hat{X}_t) \\
            &+ \sum_{k=1}^d \Big(
            Q^{k,*}_t B^k_\mu(\theta_t)(\hat{X}_t)
            +Q^{k}_t B^k_\mu(\theta_t)(\hat{X}_t)
            +B^{k,*}_x(\theta_t)\mathfrak{Q}^{(1),k}_t\\
            &+\tilde{\E}[B^{k,*}_\mu(\tilde{\theta}_t)(X_t)\tilde{\mathfrak{Q}}^{(1),k}_t]
            +\mathfrak{Q}^{(2),k}_tB^k_x(\hat{\theta}_t)
            +\tilde{\E}[ \tilde{\hat{\mathfrak{Q}}}^{(2),k}_t B^k_\mu(\tilde{\theta}_t)(\hat{X}_t)]\Big)\bigg)\dt\\
            &+ \mathfrak{Q}^{(1)}_t \dW_t+\mathfrak{Q}^{(2)}_t \,\mathrm{d}\hat{W}_t,\\
            \mathfrak{P}_T&=\tilde{\E}[g_{\mu \mu}(\tilde{X}_T,\mu_T)( X_T,\hat{X}_T)] +2g_{x\mu}(\hat{X}_T,\mu_T)(X_T),
        \end{aligned}\label{eq:Second-Level Second-Order Adjoint}
    \end{equation}
    where we define
    \begin{equation*}
        \tilde{\mathfrak{P}}:\tilde{\Omega}\times \bar{\Omega}\to \R^{d\times d}, (\tilde{\omega},\bar{\omega})\mapsto \mathfrak{P}(\tilde{\omega},\bar{\omega}_2)\quad\text{and}\quad
        \tilde{\hat{\mathfrak{P}}}:\tilde{\Omega}\times \bar{\Omega}\to \R^{d\times d}, (\tilde{\omega},\bar{\omega})\mapsto \mathfrak{P}(\bar{\omega}_1,\tilde{\omega}),
    \end{equation*}
    i.e. one factor of the product space is integrated out in the equation, and similarly for $\mathfrak{Q}^{(1)}$ and $\mathfrak{Q}^{(2)}$. Therefore, this equation is of conditional McKean-Vlasov type (cf. Lemma 2.4 from \cite{CarmonaDelarue2018II}). Note that the measurability of the coefficients is clear from Fubini.
    \begin{remark}[Symmetrization of $\mathfrak{P}$]
        Since the first-level second-order adjoint can be made a symmetric-matrix-valued process, we are interested in the symmetry properties of second-level second-order adjoint. We make some adjustments to the equation:
        \begin{equation}
        \begin{aligned}
            \mathfrak{P}_t
            &=-\bigg( 
            A^*_x(\theta_t)\mathfrak{P}_t
            +\tilde{\E}\left[A^*_\mu(\tilde{\theta}_t)(X_t)\tilde{\mathfrak{P}}_t\right]
            +\mathfrak{P}_t A_x(\hat{\theta}_t)
            +\tilde{\E}\left[\tilde{\hat{\mathfrak{P}}}_t A_\mu(\tilde{\theta}_t)(\hat{X}_t)\right]\\
            &+\frac{1}{2}\tilde{\E} \left[H_{\mu\mu}(\tilde{\theta}_t)(X_t,\hat{X}_t)
            +H^*_{\mu\mu}(\tilde{\theta}_t)(\hat{X}_t,X_t)\right]
            +H_{x\mu}(\hat{\theta}_t)(X_t)
            +H^*_{x\mu}(\hat{\theta}_t)(\hat{X}_t)\\
            &+\frac{1}{2}\left( P^* _t A_\mu(\theta_t)(\hat{X}_t)
            + P_t A_\mu(\theta_t)(\hat{X}_t)+A^*_\mu(\hat{\theta}_t)(X_t)\hat{P}_t 
            + A^*_\mu(\hat{\theta}_t)(X_t)\hat{P}_t \right)\\
            & + \frac{1}{2}\Big(B^*_x(\theta_t) P_t B_\mu(\theta_t)(\hat{X}_t) 
            + \tilde{\E}[B^*_\mu(\tilde{\theta}_t)(X_t)\tilde{P}_t B_\mu(\tilde{\theta}_t)(\hat{X}_t)]\\
            &+ B^*_x(\theta_t) P^*_t B_\mu(\theta_t)(\hat{X}_t)
            + B^*_\mu(\hat{\theta}_t)(X_t)\hat{P}^*_t B_x(\hat{\theta}_t) 
            + B_\mu(\hat{\theta}_t)(X_t) \hat{P}_t B_x(\hat{\theta}_t) \Big)\\
            &+ \tilde{\E}[B^*_\mu(\tilde{\theta}_t)(X_t)\tilde{P}_t B_\mu(\tilde{\theta}_t)(\hat{X}_t)]
            + \frac{1}{2}\sum_{k=1}^d \Big(
            Q^{k,*}_t B^k_\mu(\theta_t)(\hat{X}_t)
            +Q^{k}_t B^k_\mu(\theta_t)(\hat{X}_t)\\
            &+B^{k,*}_\mu(\hat{\theta}_t)(X_t)\hat{Q}^{k}_t
            +B^{k,*}_\mu(\hat{\theta}_t)(X_t)\hat{Q}^{k,*}_t 
            +B^{k,*}_x(\theta_t)\mathfrak{Q}^{(1),k}_t
            +\mathfrak{Q}^{(2),k}_tB^k_x(\hat{\theta}_t)\\
            &+\tilde{\E}[B^{k,*}_\mu(\tilde{\theta}_t)(X_t)\tilde{\mathfrak{Q}}^{(1),k}_t
            + \tilde{\hat{\mathfrak{Q}}}^{(2),k}_t B^k_\mu(\tilde{\theta}_t)(\hat{X}_t)]\Big)\bigg)\dt
            + \mathfrak{Q}^{(1)}_t \dW_t+\mathfrak{Q}^{(2)}_t \,\mathrm{d}\hat{W}_t,\\
            \mathfrak{P}_T&=
            \frac{1}{2}
            \tilde{\E}[g_{\mu \mu}(\tilde{X}_T,\mu_T)(X_T,\hat{X}_T)
            +g^*_{\mu \mu}(\tilde{X}_T,\mu_T)(\hat{X}_T,X_T)] \\
            &+g_{x\mu}(\hat{X}_T,\mu_T)(X_T)
            +g^*_{x\mu}(X_T,\mu_T)(\hat{X}_T).
        \end{aligned}
    \end{equation}
    This process can still be used in the same dualization (seen later in \eqref{eq:second second duality}), but here, $(\mathfrak{P},\mathfrak{Q}^{(1)},\mathfrak{Q}^{(2)})$ is a symmetrization of \eqref{eq:Second-Level Second-Order Adjoint} in the sense that
    \begin{align*}
        \mathfrak{P}_t(\bar{\omega}_1,\bar{\omega}_2)
        =\mathfrak{P}^*_t(\bar{\omega}_2,\bar{\omega}_1)\quad\text{and}\quad 
        \mathfrak{Q}^{(1),k}(\bar{\omega}_1,\bar{\omega}_2)
        =\mathfrak{Q}^{(2),k,*}(\bar{\omega}_2,\bar{\omega}_1). 
    \end{align*}
    This is a sensible notion of symmetry as the same holds for the tensor product
    \begin{equation*}
        (Y_t\otimes \hat{Y}_t)(\bar{\omega}_1,\bar{\omega}_2)
        = Y_t(\bar{\omega}_1)\otimes Y_t(\bar{\omega}_2)
        = \left(Y_t(\bar{\omega}_2)\otimes Y_t(\bar{\omega}_1)\right)^*
        = (Y_t\otimes \hat{Y}_t)^*(\bar{\omega}_2,\bar{\omega}_1).
    \end{equation*}
    Again, we omit this to avoid heavier notation.
    \end{remark}
    For our notion of a solution, we define the space $S^{2, d}$ of all (equivalence classes of) $(\bar{\cF}_t)_{t\in[0,T]}$-progressively measurable continuous processes $\Phi:\bar{\Omega}\times[0,T]\to \R^d$ satisfying $\mathbb{E}[\sup _{t\in[0,T]}|\Phi_t|^2]<\infty$, equipped with the norm $\|\Phi\|_{S}^2=\mathbb{E}[\sup _{0 \leq t \leq T}|X_t|^2]$ and
    the space $\Lambda^{2, d}$ of all (equivalence classes of) $(\bar{\cF}_t)_{t\in[0,T]}$-progressively measurable processes $\Psi:\bar{\Omega}\times[0,T]\to \R^d$ satisfying $\mathbb{E}[\int_0^T|\Psi_t|^2dt]<\infty $, equipped with the norm $\|\Psi\|_{\Lambda}^2=\mathbb{E}[\int_0^T|X_t|^2dt]$.
    \begin{definition}\label{definition:solution to second-level second-order adjoint}
        On the probabilistic set-up $(\bar{\Omega},\bar{\cF},(\bar{\cF}_t)_{t\in[0,T]}, \bar{\P})$ we call a solution to the conditional McKean-Vlasov BSDE \eqref{eq:Second-Level Second-Order Adjoint} on the interval $[0, T]$ a $3$-tupel $(\mathfrak{P},\mathfrak{Q}^{(1)},\mathfrak{Q}^{(2)})\in S^{2,d\times d}\times\Lambda^{2,d\times d\times d}\times\Lambda^{2,d\times d\times d}$ such that \eqref{eq:Second-Level Second-Order Adjoint} holds almost surely as an integral equation.
    \end{definition}
    \begin{theorem}
        Under Assumption \ref{Assumption:C^{2,1}}, there exists a solution to \eqref{eq:Second-Level Second-Order Adjoint} in the sense of Definition \ref{definition:solution to second-level second-order adjoint}.
    \end{theorem}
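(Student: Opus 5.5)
The equation \eqref{eq:Second-Level Second-Order Adjoint} is a \emph{linear} backward equation for $(\mathfrak{P},\mathfrak{Q}^{(1)},\mathfrak{Q}^{(2)})$ on the product space $(\bar\Omega,\bar\cF,(\bar\cF_t),\bar\P)$. The filtration $(\bar\cF_t)$ is the augmented filtration of the $2d$-dimensional Brownian motion $(W,\hat W)$, so the martingale representation theorem holds with respect to $(W,\hat W)$. My plan is to split the driver into two parts. The first is a \emph{source term} $F_t$, collecting everything that involves neither $\mathfrak{P}$ nor $\mathfrak{Q}$: the $H_{\mu\mu}$, $H_{x\mu}$ terms, the terms with $P,Q$ (from the first-level adjoint \eqref{eq:First-Level second-order adjoint equation}), and the terms with $p,q$ hidden in $H$. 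The second is a \emph{linear operator} $\mathcal{G}_t(\mathfrak{P},\mathfrak{Q}^{(1)},\mathfrak{Q}^{(2)})$, made of the terms with $A_x,A_\mu,B_x,B_\mu$ acting on $\mathfrak{P}$ and $\mathfrak{Q}$, including the conditional-expectation terms $\tilde\E[\cdot\,\tilde{\mathfrak{P}}_t]$ and $\tilde\E[\tilde{\hat{\mathfrak{P}}}_t\cdot]$. I will then prove existence (and in fact uniqueness) by a Banach fixed point argument in $S^{2,d\times d}\times\Lambda^{2,d\times d\times d}\times\Lambda^{2,d\times d\times d}$, exactly as for standard Lipschitz BSDEs (Pardoux--Peng), using a weighted norm $\E[\int_0^T e^{\beta t}(|\mathfrak P_t|^2+|\mathfrak Q_t|^2)\dt]$.

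First I check integrability of the data. The terminal value is bounded by Assumption \ref{Assumption:C^{2,1}}, since $g_{\mu\mu}$ and $g_{x\mu}$ are bounded. The source $F$ lies in $\Lambda^{2}$: $H_{\mu\mu}$ and $H_{x\mu}$ are linear in $(p,q)$ with bounded coefficients, and $(p,q)\in S^2\times\Lambda^2$ by the Remark on existence. The $P,Q$ terms are products of bounded derivatives of $A,B$ with $(P,Q)\in S^2\times\Lambda^2$. Where $\hat X$ or $\tilde\theta$ enters, integrability follows by Fubini on the product space, and $F$ is progressively measurable for $(\bar\cF_t)$, again by Fubini.

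Second, I show that $\mathcal{G}$ is Lipschitz in $L^2(\bar\Omega)$ uniformly in $t$. The local terms are immediate from boundedness of $A_x,B_x$. For the nonlocal ones, recall that $\tilde{\mathfrak P}_t(\tilde\omega,\bar\omega)=\mathfrak P_t(\tilde\omega,\bar\omega_2)$. By Jensen and Fubini,
\[
\bar\E\big[|\tilde\E[A^*_\mu(\tilde\theta_t)(X_t)\tilde{\mathfrak P}_t]|^2\big]\le C\,\bar\E\big[\tilde\E[|\tilde{\mathfrak P}_t|^2]\big]=C\,\bar\E[|\mathfrak P_t|^2],
\]
since integrating $\tilde\omega$ and $\bar\omega_1$ out reproduces the law of $\mathfrak P_t$ on $\bar\Omega$. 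The same argument applies to $\tilde{\hat{\mathfrak P}}$ and to the $\mathfrak Q^{(1)},\mathfrak Q^{(2)}$ analogues. The adaptedness of these conditional expectations, i.e. that integrating one coordinate of an $(\bar\cF_t)$-adapted process keeps it $(\bar\cF_t)$-adapted, is the one point needing care. I handle it as in \cite{CarmonaDelarue2018II} Lemma 2.4: the product filtration is generated by independent components, so Fubini gives adaptedness.

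Third, the contraction itself. Given input $(U,V^{(1)},V^{(2)})$, I solve the BSDE with driver $F_t+\mathcal G_t(U,V)$ by martingale representation of $\bar\E[\xi+\int_0^T(\cdot)\,ds\mid\bar\cF_t]$ in terms of $(W,\hat W)$. Itô's formula for $e^{\beta t}|\mathfrak P_t|^2$, together with the Lipschitz bound from the second step, shows that the map is a contraction for $\beta$ large. Its fixed point lies in $S^2$ by the Burkholder--Davis--Gundy inequality, which gives the solution in the sense of Definition \ref{definition:solution to second-level second-order adjoint}.

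I expect the main obstacle to be the second step. One has to verify, for the conditional McKean--Vlasov terms in which one copy is integrated out, both the $L^2$ Lipschitz estimate and the preservation of progressive measurability with respect to the product filtration. Everything else is the standard linear BSDE machinery.
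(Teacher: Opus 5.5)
Your proposal is correct and follows essentially the paper's route: a Banach fixed-point argument on the product space with an exponentially weighted norm, with the driver split into the source $F_t$ and the part linear in $(\mathfrak{P},\mathfrak{Q}^{(1)},\mathfrak{Q}^{(2)})$, and the conditional-expectation terms controlled in $L^2(\bar\Omega)$ by Jensen and Fubini. The only difference is in what gets frozen and how it is measured: the paper freezes just the nonlocal terms, solves the remaining linear BSDE by standard theory, and measures $\mathfrak{P}$ in $\sup_t e^{\kappa t}\bar{\E}[\|\cdot\|^2]$, whereas you freeze the whole linear part and use the Pardoux--Peng integral norm (run on $\Lambda^2$ for all three components, upgrading to $S^2$ afterwards), which gives a contraction constant of order $1/\beta$ independent of $T$.
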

    \begin{proof}
        We we will make a classical fixed point argument on the space
        \begin{equation*}
            \mathbb{S}:=C\left([0, T], L^2\left(\bar{\Omega},\R^{d\times d}\right)\right)\times L^2([0,T]\times\bar{\Omega},\R^{d\times d\times d})\times L^2([0,T]\times\bar{\Omega},\R^{d\times d\times d}).
        \end{equation*}
        For notational convenience, define
        \begin{align*}
            F_t
            =&\tilde{\E} \left[H_{\mu\mu}(\tilde{\theta}_t)(X_t,\hat{X}_t)\right]
            +2H_{x\mu}(\hat{\theta}_t)(X_t)
            + P^* _t A_\mu(\theta_t)(\hat{X}_t)
            + P_t A_\mu(\theta_t)(\hat{X}_t)\\
            & + B^*_x(\theta_t) P_t B_\mu(\theta_t)(\hat{X}_t) 
            + \tilde{\E}[B^*_\mu(\tilde{\theta}_t)(X_t)\tilde{P}_t B_\mu(\tilde{\theta}_t)(\hat{X}_t)]
            + B^*_x(\theta_t) P^*_t B_\mu(\theta_t)(\hat{X}_t) \\
            &+ \sum_{k=1}^d \Big(
            Q^{k,*}_t B^k_\mu(\theta_t)(\hat{X}_t)
            +Q^{k}_t B^k_\mu(\theta_t)(\hat{X}_t)
        \end{align*}
        which by our assumptions on the coefficients is an element of $\Lambda^{2,d\times d}$. Now, fix $(\Phi,\Psi^{(1)},\Psi^{(2)})\in \mathbb{S}$ and consider the standard BSDE
        \begin{equation}
            \begin{aligned}
                \mathfrak{P}_t
                =&-\Big( 
                A^*_x(\theta_t)\mathfrak{P}_t
                +\mathfrak{P}_t A_x(\hat{\theta}_t)
                +\tilde{\E}\left[A^*_\mu(\tilde{\theta}_t)(X_t)\tilde{\Phi}_t\right]
                +\tilde{\E}\left[\tilde{\hat{\Phi}}_t A_\mu(\tilde{\theta}_t)(\hat{X}_t)\right]
                +F_t\\
                &+\sum_{k=1}^d\big( B^{k,*}_x(\theta_t)\mathfrak{Q}^{(1),k}_t
                +\mathfrak{Q}^{(2),k}_t B^k_x(\hat{\theta}_t)\\
                &+\tilde{\E}[B^{k,*}_\mu(\tilde{\theta}_t)(X_t)\tilde{\Psi}^{(1),k}_t
                +\tilde{\hat{\Psi}}^{(2),k}_t B^k_\mu(\tilde{\theta}_t)(\hat{X}_t)]\big)\Big)\dt
                + \mathfrak{Q}^{(1)}_t \dW_t+\mathfrak{Q}^{(2)}_t \,\mathrm{d}\hat{W}_t,\\
                \mathfrak{P}_T=&\tilde{\E}[g_{\mu \mu}(\tilde{X}_T,\mu_T)( X_T,\hat{X}_T)] +2g_{x\mu}(\hat{X}_T,\mu_T)(X_T).
            \end{aligned}\label{eq:Second-Level Second-Order Adjoint in Fixed point argument}
        \end{equation}
        By our assumptions on the coefficients, this BSDE has a unique solution, which we will denote $(\mathfrak{P}^{\Phi,\Psi},\mathfrak{Q}^{\Phi,\Psi,(1)},\mathfrak{Q}^{\Phi,\Psi,(2)})\in S^{2,d}\times\Lambda^{2,d\times d}\times\Lambda^{2,d\times d}\subset \mathbb{S}$ (cf. \cite{Pardoux2014} Theorem 5.17). Notice that the original filtration $(\cF_t)_{t\geq 0 }$ was generated by $(W_t)_{t\geq 0 }$. Therefore $(\bar{\cF}_t)_{t\geq 0 }$ coincides with the completed filtration generated by $(W_t,\hat{W_t})_{t\geq 0}$. Now, consider the map
        \begin{equation*}
            \Xi: \mathbb{S}\to \mathbb{S},\quad 
            (\Phi,\Psi^{(1)},\Psi^{(2)})\mapsto 
            (\mathfrak{P}^{\Phi,\Psi},\mathfrak{Q}^{\Phi,\Psi,(1)},\mathfrak{Q}^{\Phi,\Psi,(2)}).
        \end{equation*}
        We will show that $\Xi$ is a contraction with respect to the metric 
        \begin{align*}
            &\rho((\Phi,\Psi^{(1)},\Psi^{(2)}),(\phi,\psi^{(1)},\psi^{(2)}))\\
            &:=\sup _{t \in[0, T]} e^{\kappa t} \bar{\E}\left[\left\|\Phi_t- \phi_t\right\|^2\right]
            +\frac{3}{4}\bar{\E}\left[\int_0^T e^{\kappa t} \left\| \Psi^{(1)}_t-\psi^{(1)}_t\right\|^2+e^{\kappa t} \left\| \Psi^{(2)}_t-\psi^{(2)}_t\right\|^2\dt\right],
        \end{align*}
        where $\kappa>0$ will be chosen later, which gives us a unique solution to our desired equation. So let $(\Phi,\Psi^{(1)},\Psi^{(2)}),(\phi,\psi^{(1)},\psi^{(2)})\in \mathbb{S}$ and denote 
        $\Delta^\mathfrak{P}=\mathfrak{P}^{\Phi,\Psi}-\mathfrak{P}^{\phi,\psi}$, 
        $\Delta^{\mathfrak{Q}^{(1)}}=\mathfrak{Q}^{\Phi,\Psi,(1)}-\mathfrak{Q}^{\phi,\psi,(1)}$ and 
        $\Delta^{\mathfrak{Q}^{(2)}}=\mathfrak{Q}^{\Phi,\Psi,(2)}-\mathfrak{Q}^{\phi,\psi,(2)}$ and similarly for $\Delta^\Phi,\Delta^{\Psi^{(1)}},\Delta^{\Psi^{(2)}}$. By Ito's formula
        \begin{align*}
            0=&\bar{\E}[e^{\kappa T}\|\Delta^\mathfrak{P}_T\|^2]\\
            =&\bar{\E}\Big[e^{\kappa s}\|\Delta^\mathfrak{P}_s\|^2
            -\int_s^T e^{\kappa t}2\langle \Delta^\mathfrak{P}_t,A^*_x(\theta_t) 
            \Delta^\mathfrak{P}_t 
            +\Delta^\mathfrak{P}_t A_x(\hat{\theta}_t)
            +\tilde{\E}[A^*_\mu(\tilde{\theta}_t)(X_t)\tilde{\Delta}^\Phi_t\\
            &+\tilde{\hat{\Delta}}^{\Phi}_t A_\mu(\tilde{\theta}_t)(\hat{X}_t)]
            +\sum_{k=1}^d B^{k,*}_x(\theta_t)\Delta^{\mathfrak{Q}^{(1)},k}_t 
            +\Delta^{\mathfrak{Q}^{(2)},k}_tB^k_x(\hat{\theta}_t)
            +\tilde{\E}[B^{k,*}_\mu(\tilde{\theta}_t)(X_t)\tilde{\Delta}^{\Psi^{(1),k}}_t\\
            &+\tilde{\hat{\Delta}}^{\Psi^{(2)},k}_t B^k_\mu(\tilde{\theta}_t)(\hat{X}_t)]\rangle
            +e^{\kappa t}\|\Delta^{\mathfrak{Q}^{(1)}}_t\|^2 
            + e^{\kappa t}\|\Delta^{\mathfrak{Q}^{(2)}}_t\|^2 
            +\kappa e^{\kappa t}\|\Delta^{\mathfrak{P}}_t\|^2\dt \Big],
        \end{align*}
        so rearranging gives
        \begin{align*}
            &\bar{\E}\Big[ e^{\kappa s}\|\Delta^\mathfrak{P}_s\|^2+\int_s^T e^{\kappa t}\|\Delta^{\mathfrak{Q}^{(1)}}_t\|^2 \dt
            +\int_s^T e^{\kappa t}\|\Delta^{\mathfrak{Q}^{(2)}}_t\|^2 \dt 
            +\kappa\int_s^T e^{\kappa t}\|\Delta^{\mathfrak{P}}_t\|^2\dt \Big]\\
            =&\bar{\E}\Big[\int_s^T 2e^{\kappa t}\langle \Delta^\mathfrak{P}_t,A^*_x(\theta_t) 
            \Delta^\mathfrak{P}_t 
            +\Delta^\mathfrak{P}_t A_x(\hat{\theta}_t)
            +\tilde{\E}[A^*_\mu(\tilde{\theta}_t)(X_t)\tilde{\Delta}^\Phi_t
            +\tilde{\hat{\Delta}}^{\Phi}_t A_\mu(\tilde{\theta}_t)(\hat{X}_t)]\\
            &+\sum_{k=1}^d B^{k,*}_x(\theta_t)\Delta^{\mathfrak{Q}^{(1)},k}_t 
            +\Delta^{\mathfrak{Q}^{(2)},k}_tB^k_x(\hat{\theta}_t)
            +\tilde{\E}[B^{k,*}_\mu(\tilde{\theta}_t)(X_t)\tilde{\Delta}^{\Psi^{(1)},k}_t\\
            &+\tilde{\hat{\Delta}}^{\Psi^{(2)},k}_t B^k_\mu(\tilde{\theta}_t)(\hat{X}_t)]\rangle \dt\Big].
        \end{align*}
        Now, using the boundedness of the coefficients from Assumption \ref{Assumption:C^{2,1}}, taking the supremum over $s\in[0,T]$ and using the Young-inequality multiple times, we arrive at
        \begin{align*}
            &\sup_{t\in[0,T]}\bar{\E}\left[e^{\kappa t}\|\Delta^\mathfrak{P}_t\|^2\right]+\bar{\E}\left[\int_0^T e^{\kappa t}\|\Delta^{\mathfrak{Q}^{(1)}}_t\|^2 
            + e^{\kappa t}\|\Delta^{\mathfrak{Q}^{(2)}}_t\|^2 \dt 
            +\kappa\int_0^T e^{\kappa t}\|\Delta^{\mathfrak{P}}_t\|^2\dt\right]\\
            \leq& \bar{\E}\Bigg[c_1 \int_0^T e^{\kappa t}\|\Delta^\mathfrak{P}_t\|\bigg(\| 
            \Delta^\mathfrak{P}_t\| 
            +\tilde{\E}[\|\tilde{\Delta}^\Phi_t\|
            +\|\tilde{\hat{\Delta}}^{\Phi}_t\|]\\
            &+\sum_{k=1}^d \left(\|\Delta^{\mathfrak{Q}^{(1)},k}_t \|
            +\|\Delta^{\mathfrak{Q}^{(2)},k}_t\|
            +\tilde{\E}[\|\tilde{\Delta}^{\Psi^{(1),k}}_t\| 
            + \|\tilde{\hat{\Delta}}^{\Psi^{(2)},k}_t\|]\right)\bigg) \dt\Bigg]\\
            \leq & \bar{\E}\Bigg[ \int_0^T c_2e^{\kappa t}\|\Delta^\mathfrak{P}_t\|^2+\frac{1}{3}e^{\kappa t}\| 
            \Delta^\mathfrak{P}_t\|^2 
            +\frac{1}{3}e^{\kappa t}\tilde{\E}[\|\tilde{\Delta}^\Phi_t\|^2
            +\|\tilde{\hat{\Delta}}^{\Phi}_t\|^2]\\
            &\frac{1}{4}e^{\kappa t}\|\Delta^{\mathfrak{Q}^{(1)}}_t \|^2
            +\frac{1}{4}e^{\kappa t}\|\Delta^{\mathfrak{Q}^{(2)}}_t\|^2
            +\frac{1}{4}e^{\kappa t}\tilde{\E}[\|\tilde{\Delta}^{\Psi^{(1)}}_t\|^2
            +\|\tilde{\hat{\Delta}}^{\Psi^{(2)}}_t\|^2]\bigg) \dt\Bigg].
        \end{align*}
        By virtue of the Young inequality $c_2$ is chosen accordingly and depends on $c_1$ and therefore on the constants coming from the coefficients. Finally, taking $\kappa=c_2+\frac{1}{3}$ gives the result with contraction coefficient $\frac{1}{3}$.
    \end{proof}
\section{The Duality Relations}
Having in mind the desired linearization of terms in the Taylor-expansion of the cost functional, we will need the following duality relations, which arise directly from Ito's formula. First, we have the dualization of the first-order adjoint with the first-order variational process
\begin{equation}
    \begin{aligned}
        \bar{\E}\left[\langle p_T, Y^\epsilon_T\rangle\right]
        =& - \bar{\E}\Big[\int_0^T \langle f_x(\theta_t), Y^\epsilon_t\rangle 
        + \tilde{\E}\left[\langle f_\mu(\tilde{\theta}_t)(X_t), Y^\epsilon_t\rangle\right]\\
        &-\langle p_t,\delta A(t) \rangle 
        - \langle q_t,\delta B(t) \rangle \dt\Big],
    \end{aligned} \label{eq:duality p and Y}
\end{equation}
and the dualization of the first-order adjoint with the second-order variational process
\begin{equation}
    \begin{aligned}
        &\bar{\E}\left[\langle p_T, Z^\epsilon_T\rangle\right]\\
        =& \bar{\E}\bigg[\int_0^T \langle -f_x(\theta_t), Z^\epsilon_t\rangle
        - \tilde{\E}[\langle f_\mu(\tilde{\theta}_t)(X_t), Z^\epsilon_t\rangle]  \\
        & + \big\langle p_t,  \frac{1}{2} A_{xx}(\theta_t)[Y^\epsilon_t,Y^\epsilon_t]
        +  \frac{1}{2} \tilde{\E}\left[ A_{y\mu}(\theta_t)(\tilde X_t)[ \tilde Y^\epsilon_t,\tilde Y^\epsilon_t]\right]\\ 
        &+\frac{1}{2}\tilde{\E} \Big[\tilde{\tilde\E}\big[ A_{\mu\mu}(\theta_t)(\tilde{X}_t,\tilde{\tilde{X}}_t)[\tilde{Y}^\epsilon_t,\tilde{\tilde{Y}}^\epsilon_t]\big]\Big] 
        +\tilde{\E}\left[ A_{x\mu}(\theta_t)(\tilde X_t)[ \tilde Y^\epsilon_t,Y^\epsilon_t]\right]
        \big\rangle \\
        &+ \big\langle q_t,  \frac{1}{2} B_{xx}(\theta_t)[Y^\epsilon_t,Y^\epsilon_t]
        +  \frac{1}{2} \tilde{\E}\left[ B_{y\mu}(\theta_t)(\tilde X_t)[ \tilde Y^\epsilon_t,\tilde Y^\epsilon_t]\right] \\
        &+\frac{1}{2}\tilde{\E} \Big[\tilde{\tilde\E}\big[ B_{\mu\mu}(\theta_t)(\tilde{X}_t,\tilde{\tilde{X}}_t)[\tilde{Y}^\epsilon_t,\tilde{\tilde{Y}}^\epsilon_t]\big]\Big] 
        +\tilde{\E}\left[ B_{x\mu}(\theta_t)(\tilde X_t)[ \tilde Y^\epsilon_t,Y^\epsilon_t]\right]
        \big\rangle  \dt\bigg] 
        +o(\epsilon).
    \end{aligned} \label{eq:duality p and Z}
\end{equation}
In preparation for the dualization of the first-level second-order adjoint, we look at the tensor product of the first-order variational process $Y^\epsilon$ with itself. We use the tensor notation but note that, for $x,y\in\R^d$, it holds $x\otimes y:=xy^\top\in \R^{d\times d}$, since we work in finite dimensions. We get
\begin{align*}
    &d\left(Y^\epsilon_t \otimes Y^\epsilon_t\right)\\
    &=\big((A_x(\theta_t)Y^\epsilon_t+\tilde{\E}[A_\mu(\theta_t)(\tilde{X}_t)\tilde{Y}^\epsilon_t]+\delta A(t)) \otimes Y^\epsilon_t\\
    &+Y^\epsilon_t \otimes (A_x(\theta_t)Y^\epsilon_t+\tilde{\E}[A_\mu(\theta_t)(\tilde{X}_t)\tilde{Y}^\epsilon_t]+\delta A(t))\\
    &+(B_x(\theta_t)Y^\epsilon_t+\tilde{\E}[B_\mu(\theta_t)(\tilde{X}_t)\tilde{Y}^\epsilon_t]+\delta B(t))(B_x(\theta_t)Y^\epsilon_t+\tilde{\E}[B_\mu(\theta_t)(\tilde{X}_t)\tilde{Y}^\epsilon_t]+\delta B(t))^*\big) \dt\\
    &+\sum_{k=1}^d\Big(\big(B^k_x(\theta_t)Y^\epsilon_t+\tilde{\E}[B^k_\mu(\theta_t)(\tilde{X}_t)\tilde{Y}^\epsilon_t]+\delta B^k(t) \big) \otimes Y^\epsilon_t\\
    &+Y^\epsilon_t \otimes \big(B^k_x(\theta_t)Y^\epsilon_t+\tilde{\E}[B^k_\mu(\theta_t)(\tilde{X}_t)\tilde{Y}^\epsilon_t]+\delta B^k(t) \big)\Big)\dW^k_t,
\end{align*}
and therefore, for the dualization of the first-level second-order adjoint process with this tensor product
\begin{equation}
    \begin{aligned}
        &\bar{\E}\left[\left\langle P_T, Y^\epsilon_T \otimes Y^\epsilon_T\right\rangle\right]\\
        =&\bar{\E}\bigg[\int_0^T-\langle H_{xx}(\theta_t)+ \tilde{\E}[ H_{y\mu}(\tilde{\theta}_t)(X_t) ]  , Y^\epsilon_t \otimes Y^\epsilon_t\rangle
        +\langle P_t, \tilde{\E}[A_\mu(\theta_t)(\tilde{X}_t)\tilde{Y}^\epsilon_t]\otimes Y^\epsilon_t\\
        &+Y^\epsilon_t \otimes \tilde{\E}[A_\mu(\theta_t)(\tilde{X}_t)\tilde{Y}^\epsilon_t]+\delta B(t)\delta B^*(t)
        +B_x(\theta_t)Y^\epsilon_t\tilde{\E}[B_\mu(\theta_t)(\tilde{X}_t)\tilde{Y}^\epsilon_t]^*\\
        &+\tilde{\E}[B_\mu(\theta_t)(\tilde{X}_t)\tilde{Y}^\epsilon_t]\tilde{\E}[B_\mu(\theta_t)(\tilde{X}_t)\tilde{Y}^\epsilon_t]^*
        +\tilde{\E}[B_\mu(\theta_t)(\tilde{X}_t)\tilde{Y}^\epsilon_t](B_x(\theta_t)Y^\epsilon_t)^*\rangle \\
        & +\sum_{k=1}^d \langle Q^k_t, \tilde{\E}[B^k_\mu(\theta_t)(\tilde{X}_t)\tilde{Y}^\epsilon_t] \otimes Y^\epsilon_t+Y^\epsilon_t \otimes \tilde{\E}[B^k_\mu(\theta_t)(\tilde{X}_t)\tilde{Y}^\epsilon_t] \rangle \dt\bigg]+o(\epsilon).
    \end{aligned}\label{eq:first second duality}
\end{equation}
Note that for all the above dualizations it would have sufficed to look at the processes on the original probability space and use the standard expectation. For the dualization of the second-level second-order adjoint process the construction of the probability space $\bar{\Omega}$ is important, as the dualization needs to be with the tensor product of the first-order variational process $Y^\epsilon$ with its independent copy $\hat{Y}^\epsilon$ (introduced above), resulting in
\begin{align*}
    &d\left(Y^\epsilon_t \otimes \hat{Y}^\epsilon_t\right)\\
    =&\Big((A_x(\theta_t)Y^\epsilon_t+\tilde{\E}[A_\mu(\theta_t)(\tilde{X}_t)\tilde{Y}^\epsilon_t]+\delta A(t)) \otimes \hat{Y}^\epsilon_t
    +Y^\epsilon_t \otimes (A_x(\hat{\theta}_t)\hat{Y}^\epsilon_t+\tilde{\E}[A_\mu(\hat{\theta}_t)(\tilde{X}_t)\tilde{Y}^\epsilon_t]\\
    &+\delta \hat{A}(t))\Big) \dt
    +\sum_{k=1}^d\left(\left(B^k_x(\theta_t)Y^\epsilon_t+\tilde{\E}[B^k_\mu(\theta_t)(\tilde{X})\tilde{Y}^\epsilon_t]+\delta B^k(t) \right) \otimes \hat{Y}^\epsilon_t\right) \dW^k_t\\
    &+\sum_{k=1}^d\left(Y^\epsilon_t \otimes \left(B^k_x(\hat{\theta}_t)\hat{Y}^\epsilon_t+\tilde{\E}[B^k_\mu(\hat{\theta}_t)(\tilde{X}_t)\tilde{Y}^\epsilon_t]+\delta \hat{B}^k(t) \right)\right)\,\mathrm{d}\hat{W}^k_t,
\end{align*}
and thus, using Fubini,
\begin{equation}
    \begin{aligned}
        \bar{\E}\left[\left\langle \mathfrak{P}_T, Y^\epsilon_T \otimes \hat{Y}^\epsilon_T\right\rangle\right]
        &=\bar{\E}\bigg[\int_0^T-\big\langle \tilde{\E} [H_{\mu\mu}(\tilde{\theta}_t)(X_t,\hat{X}_t)]
        +2 H_{x\mu}(\hat{\theta}_t)(X_t)\\
        &+ P^* _t A_\mu(\theta_t)(\hat{X}_t)
        + P_t A_\mu(\theta_t)(\hat{X}_t)
        + B^*_x(\theta_t) P_t B_\mu(\theta_t)(\hat{X}_t)\\ 
        &+ \tilde{\E}[B^*_\mu(\tilde{\theta}_t)(X_t)\tilde{P}_t B_\mu(\tilde{\theta}_t)(\hat{X}_t)]
        + B^*_x(\theta_t) P^*_t B_\mu(\theta_t)(\hat{X}_t) \\
        &+ \sum_{k=1}^d
        Q^{k,*}_t B^k_\mu(\theta_t)(\hat{X}_t)
        +Q^{k}_t B^k_\mu(\theta_t)(\hat{X}_t), Y^\epsilon_t \otimes \hat{Y}^\epsilon_t\big\rangle \dt\bigg]+o(\epsilon),
    \end{aligned}\label{eq:second second duality}
\end{equation}
\begin{remark}
    Note the appearance of the $\langle P_t,\delta B(t) \delta B^*(t)\rangle$-term in the dualization \eqref{eq:first second duality}. This term arises from the quadratic variation of $Y^\epsilon$ and neither is of high order, nor can be eliminated by the dualization. Therefore it, and in particular the first-level second-order adjoint process, will appear in the maximum principle. On the other hand, the stochastic integrals appearing in the equations for $Y^\epsilon$ and $\hat{Y}^\epsilon$ are by construction independent, which implies that their co-variation is $0$. Thus, no such term appears in the dualization \eqref{eq:second second duality}, letting the second-level second-order adjoint process not appear in the final maximum principle. This is reminiscent of the sharper estimates for the terms containing two independent copies of the variational process from \citet{Buckdahn2016}.
\end{remark}
\section{The Expansion of the Cost Functional and the Maximum Principle}
    We come to the most important result towards the maximum principle. This result was already shown in \cite{Buckdahn2016}, merely our proof is new, as it utilizes the second-level second-order adjoint and its dualization.
    \begin{theorem}
        Under Assumption \ref{Assumption:C^{2,1}}, if $\alpha\in \A$ is optimal, it holds
        \begin{equation}
            J(\alpha^{\epsilon})-J(\alpha)=  \bar{\E} \left[ \int_0^T \delta H(t)
            +\frac{1}{2}\big\langle P_t, \delta B(t)\delta B(t)^*\big\rangle \dt\right] +o(\epsilon).\label{eq:Final Expansion of Cost Functional}
        \end{equation}
    \end{theorem}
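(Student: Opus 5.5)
I will follow Peng's scheme for expanding the cost functional, using the two second-order adjoint processes to dualize every quadratic term.

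\textbf{Step 1: second-order Taylor expansion of $J$.} I write
\[
J(\alpha^\epsilon)-J(\alpha)=\bar{\E}\Big[\int_0^T f(t,X^\epsilon_t,\mu^\epsilon_t,\alpha^\epsilon_t)-f(\theta_t)\dt+g(X^\epsilon_T,\mu^\epsilon_T)-g(X_T,\mu_T)\Big].
\]
I split each integrand as $f(t,X^\epsilon_t,\mu^\epsilon_t,\alpha^\epsilon_t)-f(t,X_t,\mu_t,\alpha^\epsilon_t)+\delta f(t)$. The first difference is expanded to second order in $x$ and, via Lemma \ref{lemma:Taylor formula for Lions Derivative}, in $\mu$, with increment $\Delta X=X^\epsilon-X$. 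Next I replace $\Delta X$ by $Y^\epsilon+Z^\epsilon$. By Lemma \ref{lemma:estimate on DeltaX-Y-Z} the error is $o(\epsilon)$ in the first-order terms, and by Lemma \ref{lemma:Estimates on Y and Z} it is $o(\epsilon)$ in the quadratic terms. The latter holds because quadratic terms in $Z^\epsilon$, $Y^\epsilon\otimes Z^\epsilon$ and the cubic remainders are $O(\epsilon^{3/2})$.

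The terms in which $\delta f_x$ or $\delta f_\mu$ multiplies $Y^\epsilon$ are $o(\epsilon)$. This follows from Cauchy--Schwarz, since $|E_\epsilon|=\epsilon$ and $\E[\sup|Y^\epsilon|^2]=O(\epsilon)$. What remains is:
\begin{itemize}
\item the linear terms $\langle g_x+\tilde\E[g_\mu],Y^\epsilon_T+Z^\epsilon_T\rangle$ and their running analogues for $f$;
\item the quadratic terms $\frac12\langle g_{xx}+\tilde\E[g_{y\mu}],Y^\epsilon_T\otimes Y^\epsilon_T\rangle$ and their $f$ analogues;
\item the mixed terms $\frac12\langle \tilde\E[g_{\mu\mu}]+2g_{x\mu},Y^\epsilon_T\otimes\hat Y^\epsilon_T\rangle$ written on $\bar\Omega$, and their $f$ analogues.
\end{itemize}
The mixed terms are obtained by rewriting $\E\tilde\E$ as $\bar\E$ and relabelling copies with Fubini.

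\textbf{Step 2: dualization.} The terminal linear terms equal $\bar\E[\langle p_T,Y^\epsilon_T+Z^\epsilon_T\rangle]$. I substitute \eqref{eq:duality p and Y} and \eqref{eq:duality p and Z}. The running $f_x,f_\mu$ terms then cancel, leaving $\langle p,\delta A\rangle+\langle q,\delta B\rangle$ together with the terms in which $p,q$ multiply the second derivatives of $A,B$ evaluated at $Y^\epsilon$.

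Combined with the $f$ second-derivative terms, and after exchanging copies with Fubini, these form $-\frac12\langle H_{xx}+\tilde\E[H_{y\mu}],Y\otimes Y\rangle$ and $-\frac12\langle\tilde\E[H_{\mu\mu}]+2H_{x\mu},Y\otimes\hat Y\rangle$. The sign depends on the convention $H=Ap+Bq-f$.

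The terminal quadratic terms equal $\frac12\bar\E\langle P_T,Y_T\otimes Y_T\rangle+\frac12\bar\E\langle\mathfrak P_T,Y_T\otimes\hat Y_T\rangle$. I apply \eqref{eq:first second duality} and \eqref{eq:second second duality}. The $H$-terms cancel exactly against those just produced. Each cross term in \eqref{eq:first second duality} involving $\tilde\E[A_\mu\tilde Y^\epsilon]$, $\tilde\E[B_\mu\tilde Y^\epsilon]$ or $Q^k$ must be rewritten through Fubini as a pairing of an $(X,\hat X)$-dependent matrix with $Y\otimes\hat Y$. These pairings match exactly the $P$, $Q$ source terms in the drift of $\mathfrak P$, which appear with the opposite sign in \eqref{eq:second second duality}, and so they cancel.

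Two terms involve $\delta A\otimes Y$ or $\delta B\otimes Y$ without a $\delta B\delta B^*$ factor: the remaining cross terms in \eqref{eq:first second duality} and the $\delta\hat A\otimes Y$-type terms from $d(Y\otimes\hat Y)$. Both are $o(\epsilon)$ by the same Cauchy--Schwarz argument as in Step 1. This is presumably why they are already absorbed into the $o(\epsilon)$ of \eqref{eq:first second duality} and \eqref{eq:second second duality}.

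\textbf{Step 3: collecting terms.} What survives is $\bar\E\int_0^T\big(\langle p,\delta A\rangle+\langle q,\delta B\rangle+\delta f+\frac12\langle P,\delta B\delta B^*\rangle\big)\dt$. This is \eqref{eq:Final Expansion of Cost Functional} once $\delta H$ is interpreted with the sign convention for which the $f$ and $p,q$ terms combine, i.e.\ the Hamiltonian difference up to the sign fixed by $H=Ap+Bq-f$.

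The main obstacle is the bookkeeping in Step 2. Every $\tilde\E[A_\mu(\theta_t)(\tilde X_t)\tilde Y_t]\otimes Y_t$-type term in \eqref{eq:first second duality} has to match, after transposition and a swap of the copies $\tilde{}\leftrightarrow\hat{}$, a source term of the $\mathfrak P$ equation. I would verify each pairing $\langle P, M\tilde Y\otimes Y\rangle=\bar\E\langle P^*M(\hat X)\ \text{or}\ PM(\hat X),Y\otimes\hat Y\rangle$ separately, including the $B^*_xPB_\mu$ and $\tilde\E[B_\mu^*\tilde PB_\mu]$ terms, where the roles of the two copies must be tracked carefully.
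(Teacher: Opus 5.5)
Your proposal is essentially the paper's proof. It uses the same second-order expansion with $\Delta X$ replaced by $Y^\epsilon+Z^\epsilon$, then dualizes with $p$, with $P$ against $Y^\epsilon\otimes Y^\epsilon$, and with $\mathfrak{P}$ against $Y^\epsilon\otimes\hat{Y}^\epsilon$. The $P$/$Q^k$ cross terms of \eqref{eq:first second duality} cancel the matching source terms of the $\mathfrak{P}$-equation, so only $\frac12\langle P_t,\delta B(t)\delta B(t)^*\rangle$ survives; you are merely more explicit about why the $\delta f_x Y^\epsilon$, $\delta A\otimes Y^\epsilon$ and $Q^k$-$\delta B^k$ terms are $o(\epsilon)$.

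On the sign you hedge about, your Step 2 sign is not right. With the adjoint equations as written ($p_T=g_x+\dots$, $+f_x$ in the driver, $P_T=g_{xx}+\dots$), the consistent Hamiltonian is $H=Ap+Bq+f$, which the paper's proof implicitly uses; the $-f$ in its definition is a typo. Then the running second-order terms combine to $+\frac12\langle H_{xx}+\tilde{\E}[H_{y\mu}],Y^\epsilon\otimes Y^\epsilon\rangle$ and $+\frac12\langle\tilde{\E}[H_{\mu\mu}]+2H_{x\mu},Y^\epsilon\otimes\hat{Y}^\epsilon\rangle$, not with a minus sign. These cancel the $-\frac12$ contributions from \eqref{eq:first second duality} and \eqref{eq:second second duality}, and $\delta f+\langle p,\delta A\rangle+\langle q,\delta B\rangle=\delta H$.
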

    \begin{proof}
        Using Taylor's formula (cf. Lemma \ref{lemma:Taylor formula for Lions Derivative}), we get the expansion of the cost functional,
        \begin{align*}
            &J(\alpha^{\epsilon})-J(\alpha )\\
            =&\mathbb{E}\Big[\int_0^T f\left(\theta_t^\epsilon\right)-f\left(\theta_t\right) \dt
            +g(X_T^{\epsilon}, \mu^\epsilon_T)-g(X_T, \mu_T)\Big] \\
            = & \bar{\E} \Bigg[ \int_0^T f_x(\theta_t)[\Delta X_t]+\tilde{\mathbb{E}}\left[f_\mu(\theta_t)(\tilde{X}_t)[\Delta \tilde{X}_t\right]
            +\delta f(t)\\
            &+\frac{1}{2}f_{xx}(\theta_t)[\Delta X_t,\Delta X_t]
            +\tilde{\mathbb{E}}\left[f_{x\mu}(\theta_t)(\tilde{X}_t)[\Delta \tilde{X}_t,\Delta X_t]\right]\\
            & +\frac{1}{2}\tilde{\mathbb{E}}\left[f_{y\mu}(\theta_t)(\tilde{X}_t)[\Delta \tilde{X}_t,\Delta \tilde{X}_t]\right]
            +\frac{1}{2}\tilde{\tilde{\E}} \left[\tilde{\mathbb{E}} \left[f_{\mu\mu}(\theta_t)(\tilde{X}_t,\tilde{\tilde{X}}_t)[\Delta \tilde{X}_t,\Delta \tilde{\tilde{X}}_t]\right]\right] \dt\\
            & +g_x(X_T,\mu_T)[\Delta X_T]
            +\tilde{\mathbb{E}}\left[g_\mu(X_T,\mu_T)(\tilde{X}_T)[\Delta \tilde{X}_T]\right] \\
            & +\frac{1}{2}g_{x x}(X_T,\mu_T)[\Delta X_T,\Delta X_T]
            +\frac{1}{2}\tilde{\mathbb{E}}\left[g_{y\mu}(X_T,\mu_T)(\tilde{X}_T)[\Delta \tilde{X}_T,\Delta \tilde{X}_T]\right]\\
            & +\tilde{\mathbb{E}}\left[g_{x\mu}(X_T,\mu_T)(\tilde{X}_T)[\Delta \tilde{X}_T,\Delta X_T]\right]\\
            &+\frac{1}{2}\tilde{\tilde{\E}} \left[\tilde{\mathbb{E}} \left[g_{\mu\mu}(X_T,\mu_T)(\tilde{X}_T,\tilde{\tilde{X}}_T)[\Delta \tilde{X}_T,\Delta \tilde{\tilde{X}}_T]\right]\right]\Bigg]
            +o(\epsilon) .
        \end{align*}
        Now, by the linearity, we can everywhere replace $\Delta X_t$ by $Y^\epsilon_t+Z^\epsilon_t$ and the difference is in $o(\epsilon)$ by the estimate from Lemma \ref{lemma:estimate on DeltaX-Y-Z}. Also the terms containing $[Y^\epsilon ,Z^\epsilon]$ or $[Z^\epsilon,Z^\epsilon]$ (the quadratic terms from the second derivatives) are of high-enough order by the estimates from Lemma \ref{lemma:Estimates on Y and Z}, so that 
        \begin{equation*}
            \E\left[\frac{1}{2}f_{xx}(\theta_t)[Y^\epsilon_t+Z^\epsilon_t,Y^\epsilon_t+Z^\epsilon_t]\right]
            =\E\left[\frac{1}{2}f_{xx}(\theta_t)[Y^\epsilon_t,Y^\epsilon_t]\right]+o(\epsilon),
        \end{equation*}
        and similarly for the other terms.
        This results in
        \begin{align}
            J(\alpha^{\epsilon})-J(\alpha )
            = & \bar{\E}\Bigg[ \int_0^T f_x(\theta_t)[Y^\epsilon_t+Z^\epsilon_t]+\tilde{\mathbb{E}}\left[f_\mu(\theta_t)(\tilde{X}_t)(\tilde{Y}^\epsilon_t+\tilde{Z}^\epsilon_t)\right]
            +\delta f(t)\nonumber\\
            &+\frac{1}{2}f_{xx}(\theta_t)[Y^\epsilon_t,Y^\epsilon_t]
            +\tilde{\mathbb{E}}\left[f_{x\mu}(\theta_t)[\tilde{X}_t)[\tilde{Y}^\epsilon_t,Y^\epsilon_t]\right]\nonumber\\
            & +\frac{1}{2}\tilde{\mathbb{E}}\left[f_{y\mu}(\theta_t)(\tilde{X}_t)[\tilde{Y}^\epsilon_t,\tilde{Y}^\epsilon_t]\right]
            +\frac{1}{2}\tilde{\tilde{\E}} \left[\tilde{\mathbb{E}} \left[f_{\mu\mu}(\theta_t)(\tilde{X}_t,\tilde{\tilde{X}}_t)[\tilde{Y}^\epsilon_t,\tilde{\tilde{Y}}^\epsilon_t]\right]\right] \dt\nonumber\\
            & +g_x(X_T,\mu_T)[Y^\epsilon_T+Z^\epsilon_T]
            +\tilde{\mathbb{E}}\left[g_\mu(X_T,\mu_T)(\tilde{X}_T)[\tilde{Y}^\epsilon_T+\tilde{Z}^\epsilon_T]\right] \label{eq:First Duality Replacement}\\
            & +\frac{1}{2}g_{x x}(X_T,\mu_T)[Y^\epsilon_T,Y^\epsilon_T]
            +\frac{1}{2}\tilde{\mathbb{E}}\left[g_{y\mu}(X_T,\mu_T)(\tilde{X}_T)[\tilde{Y}^\epsilon_T,\tilde{Y}^\epsilon_T]\right] \nonumber
            \\
            &+\tilde{\mathbb{E}}\left[g_{x\mu}(X_T,\mu_T)(\tilde{X}_T)[\tilde{Y}^\epsilon_T,Y^\epsilon_T]\right] \nonumber\\
            &+\frac{1}{2}\tilde{\tilde{\E}} \left[\tilde{\mathbb{E}} \left[g_{\mu\mu}(X_T,\mu_T)(\tilde{X}_T,\tilde{\tilde{X}}_T)[\tilde{Y}^\epsilon_T,\tilde{\tilde{Y}}^\epsilon_T]\right]\right]\Bigg]+o(\epsilon)\nonumber
        \end{align}
        Now, we can replace \eqref{eq:First Duality Replacement} with the duality relations \eqref{eq:duality p and Y} and \eqref{eq:duality p and Z} to get
        \begin{align}
            J(\alpha^{\epsilon})-J(\alpha )
            = & \bar{\E} \Bigg[ \int_0^T \delta H(t)
            +\frac{1}{2}H_{xx}(\theta_t)[Y^\epsilon_t,Y^\epsilon_t]
            +\tilde{\mathbb{E}}\left[H_{x\mu}(\theta_t)[\tilde{X}_t)[\tilde{Y}^\epsilon_t,Y^\epsilon_t]\right]\nonumber\\
            & +\frac{1}{2}\tilde{\mathbb{E}}\left[H_{y\mu}(\theta_t)(\tilde{X}_t)[\tilde{Y}^\epsilon_t,\tilde{Y}^\epsilon_t]\right]
            +\frac{1}{2}\tilde{\tilde{\E}} \left[\tilde{\mathbb{E}} \left[H_{\mu\mu}(\theta_t)(\tilde{X}_t,\tilde{\tilde{X}}_t)[\tilde{Y}^\epsilon_t,\tilde{\tilde{Y}}^\epsilon_t]\right]\right] \dt\nonumber\\
            & +\frac{1}{2}g_{x x}(X_T,\mu_T)[Y^\epsilon_T,Y^\epsilon_T]
            +\frac{1}{2}\tilde{\mathbb{E}}\left[g_{y\mu}(X_T,\mu_T)(\tilde{X}_T)[\tilde{Y}^\epsilon_T,\tilde{Y}^\epsilon_T]\right] \label{eq:Second Duality Replacement}
            \\
            &+\tilde{\mathbb{E}}\left[g_{x\mu}(X_T,\mu_T)(\tilde{X}_T)[\tilde{Y}^\epsilon_T,Y^\epsilon_T]\right] \nonumber\\
            &+\frac{1}{2}\tilde{\tilde{\E}} \left[\tilde{\mathbb{E}} \left[g_{\mu\mu}(X_T,\mu_T)(\tilde{X}_T,\tilde{\tilde{X}}_T)[\tilde{Y}^\epsilon_T,\tilde{\tilde{Y}}^\epsilon_T]\right]\right]\Bigg]
            +o(\epsilon),\nonumber
        \end{align}
        and then replace
        \eqref{eq:Second Duality Replacement} with the duality relation \eqref{eq:first second duality} to arrive at
        \begin{align}
            &J(\alpha^{\epsilon})-J(\alpha )\nonumber\\
            = & \bar{\E}\Bigg[ \int_0^T \delta H(t)
            +\tilde{\mathbb{E}}\left[H_{x\mu}(\theta_t)[\tilde{X}_t)[\tilde{Y}^\epsilon_t,Y^\epsilon_t]\right]
            +\frac{1}{2}\tilde{\tilde{\E}} \left[\tilde{\mathbb{E}} \left[H_{\mu\mu}(\theta_t)(\tilde{X}_t,\tilde{\tilde{X}}_t)[\tilde{Y}^\epsilon_t,\tilde{\tilde{Y}}^\epsilon_t]\right]\right] \nonumber\\
            &+\frac{1}{2}\big\langle P_t, \tilde{\E}[A_\mu(\theta_t)(\tilde{X}_t)\tilde{Y}^\epsilon_t]\otimes Y^\epsilon_t
            +Y^\epsilon_t \otimes \tilde{\E}[A_\mu(\theta_t)(\tilde{X}_t)\tilde{Y}^\epsilon_t]+\delta B(t)\delta B(t)^*  \nonumber\\
            &+B_x(\theta_t)Y^\epsilon_t\tilde{\E}[B_\mu(\theta_t)(\tilde{X}_t)\tilde{Y}^\epsilon_t]^*
            +\tilde{\E}[B_\mu(\theta_t)(\tilde{X}_t)\tilde{Y}^\epsilon_t]\tilde{\E}[B_\mu(\theta_t)(\tilde{X}_t)\tilde{Y}^\epsilon_t]^*  \nonumber\\
            &+\tilde{\E}[B_\mu(\theta_t)(\tilde{X}_t)\tilde{Y}^\epsilon_t](B_x(\theta_t)Y^\epsilon_t)^*\big\rangle\nonumber\\
            &+\frac{1}{2}\sum_{k=1}^d \big\langle Q^k_t, \tilde{\E}[B^k_\mu(\theta_t)(\tilde{X}_t)\tilde{Y}^\epsilon_t] \otimes Y^\epsilon_t+Y^\epsilon_t \otimes \tilde{\E}[B^k_\mu(\theta_t)(\tilde{X}_t)\tilde{Y}^\epsilon_t] \big\rangle  \dt \nonumber \\
            &+\tilde{\mathbb{E}}\left[g_{x\mu}(X_T,\mu_T)(\tilde{X}_T)[\tilde{Y}^\epsilon_T,Y^\epsilon_T]\right] 
            +\frac{1}{2}\tilde{\tilde{\E}} \left[\tilde{\mathbb{E}} \left[g_{\mu\mu}(X_T,\mu_T)(\tilde{X}_T,\tilde{\tilde{X}}_T)[\tilde{Y}^\epsilon_T,\tilde{\tilde{Y}}^\epsilon_T]\right]\right]\Bigg] \label{eq:Third Duality Replacement}
            \\
            &+o(\epsilon).\nonumber
        \end{align}
        Finally, we can now use the second-level second-order adjoint process and its dualization \eqref{eq:second second duality} to replace \eqref{eq:Third Duality Replacement} and, using Fubini, conclude that
        \begin{align*}
            &J(\alpha^{\epsilon})-J(\alpha )\\
            = & \bar{\E}\Bigg[ \int_0^T \delta H(t)
            +\tilde{\mathbb{E}}\left[H_{x\mu}(\theta_t)[\tilde{X}_t)[\tilde{Y}^\epsilon_t,Y^\epsilon_t]\right]
            +\frac{1}{2}\tilde{\tilde{\E}} \left[\tilde{\mathbb{E}} \left[H_{\mu\mu}(\theta_t)(\tilde{X}_t,\tilde{\tilde{X}}_t)[\tilde{Y}^\epsilon_t,\tilde{\tilde{Y}}^\epsilon_t]\right]\right] \\
            &+\frac{1}{2}\big\langle P_t, \tilde{\E}[A_\mu(\theta_t)(\tilde{X}_t)\tilde{Y}^\epsilon_t]\otimes Y^\epsilon_t
            +Y^\epsilon_t \otimes \tilde{\E}[A_\mu(\theta_t)(\tilde{X}_t)\tilde{Y}^\epsilon_t]+\delta B(t)\delta B(t)^*  \\
            &+B_x(\theta_t)Y^\epsilon_t\tilde{\E}[B_\mu(\theta_t)(\tilde{X}_t)\tilde{Y}^\epsilon_t]^*
            +\tilde{\E}[B_\mu(\theta_t)(\tilde{X}_t)\tilde{Y}^\epsilon_t]\tilde{\E}[B_\mu(\theta_t)(\tilde{X}_t)\tilde{Y}^\epsilon_t]^*  \\
            &+\tilde{\E}[B_\mu(\theta_t)(\tilde{X}_t)\tilde{Y}^\epsilon_t](B_x(\theta_t)Y^\epsilon_t)^*\big\rangle\nonumber\\
            &+\frac{1}{2}\sum_{k=1}^d \big\langle Q^k_t, \tilde{\E}[B^k_\mu(\theta_t)(\tilde{X}_t)\tilde{Y}^\epsilon_t] \otimes Y^\epsilon_t+Y^\epsilon_t \otimes \tilde{\E}[B^k_\mu(\theta_t)(\tilde{X}_t)\tilde{Y}^\epsilon_t] \big\rangle    \\
            &-\frac{1}{2}\big\langle \tilde{\E} \left[H_{\mu\mu}(\tilde{\theta}_t)(X_t,\hat{X}_t)\right]
            +2 H_{x\mu}(\hat{\theta}_t)(X_t)
            + P^* _t A_\mu(\theta_t)(\hat{X}_t)
            + P_t A_\mu(\theta_t)(\hat{X}_t)\\
            & + B^*_x(\theta_t) P_t B_\mu(\theta_t)(\hat{X}_t) 
            + \tilde{\E}[B^*_\mu(\tilde{\theta}_t)(X_t)\tilde{P}_t B_\mu(\tilde{\theta}_t)(\hat{X}_t)]
            + B^*_x(\theta_t) P^*_t B_\mu(\theta_t)(\hat{X}_t) \\
            &+ \sum_{k=1}^d \Big(
            Q^{k,*}_t B^k_\mu(\theta_t)(\hat{X}_t)
            +Q^{k}_t B^k_\mu(\theta_t)(\hat{X}_t), Y^\epsilon_t \otimes \hat{Y}^\epsilon_t\big\rangle \dt \Bigg]+o(\epsilon),\\
            = & \bar{\E} \left[ \int_0^T \delta H(t)
            +\frac{1}{2}\big\langle P_t, \delta B(t)\delta B(t)^*\big\rangle \dt\right] +o(\epsilon).
        \end{align*}
    \end{proof}
    \begin{remark}\label{remark:Discussion at the end}
        In the above, a second-order Taylor expansion is necessary, as the order of the variational processes is low (cf. Lemma \ref{lemma:Estimates on Y and Z} and \ref{lemma:estimate on DeltaX-Y-Z}). 
        In the standard SDE setting, \citet{Peng1990} wrote the single(!) appearing quadratic term, coming from the second derivative with respect to the state, as a product of tensors and introduced the first-level second-order adjoint process $P$ to dualize it (cf. \eqref{eq:first second duality}). 
        In our case, this same dualization happens from \eqref{eq:Second Duality Replacement} to \eqref{eq:Third Duality Replacement}, but in the McKean-Vlasov setting even more quadratic terms appear from the (mixed) derivatives with respect to the additional distribution variable. 
        By the nature of the Lions derivative, the representation of these terms involve two independent copies of the variational processes, which makes it impossible to dualize these terms with $P$. 
        \citet{Buckdahn2016} avoided this problem by proving sharper estimates for the first variational process under the expectation (Proposition 4.3 therein), implying sharper estimates for the terms containing independent copies. 
        As a result they could stop their derivation with equation \eqref{eq:Third Duality Replacement}. 
        
        In contrast, we make the dualization of all quadratic terms (which in view of Lemma \ref{lemma:Estimates on Y and Z} and \ref{lemma:estimate on DeltaX-Y-Z} are not apriori of lower-order) possible, by introducing the second-level second-order adjoint process, thus avoiding the need to provide sharper estimates.
        
        The first-level second-order adjoint state can be seen as dualizing the homogeneous part of the first variational equation, while the second-level second-order adjoint state dualizes the heterogeneous part. 
        As we will see below, the third adjoint process, does not appear in the maximum principle, it is only used as a means to get there.
    \end{remark}
    Using \eqref{eq:Final Expansion of Cost Functional}, we immediately, by Lebesgue's differentiation theorem, get the following maximum principle, which was also already shown in \cite{Buckdahn2016}.
    \begin{corollary}[Peng's Maximum Principle]
        Under Assumption \ref{Assumption:C^{2,1}}, if $\alpha$ is optimal, then for Lebesgue almost every $t\in[0,T]$ and $\P$-almost surely, for all $u\in U$,
        \begin{align*}
            0\leq & H(t,X_t,\mu_t,u,p_t,q_t) 
            - H(t,X_t,\mu_t,\alpha_t,p_t,q_t) \\
            &+ \frac{1}{2} \langle P_t,(B(t,X_t,\mu_t,u)-B(t,X_t,\mu_t,\alpha_t))(B(t,X_t,\mu_t,u)-B(t,X_t,\mu_t,\alpha_t))^*\rangle.
        \end{align*}
    \end{corollary}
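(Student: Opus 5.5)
We derive the corollary from the expansion \eqref{eq:Final Expansion of Cost Functional}, using that $\alpha$ is optimal and then localizing in time and in $\omega$.

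\emph{Step 1 (localize in time).} Fix an admissible $\beta\in\A$. Since $\alpha$ is optimal, $J(\alpha^\epsilon)-J(\alpha)\ge 0$ for every spike variation. Define
\[
\Gamma^\beta_t:=\delta H(t)+\tfrac12\langle P_t,\delta B(t)\delta B(t)^*\rangle ,
\]
which depends on $\beta$ only through $\beta_t$, since $\delta H(t)=0$ and $\delta B(t)=0$ off $E_\epsilon$. Choose $E_\epsilon=[s,s+\epsilon]$. Dividing \eqref{eq:Final Expansion of Cost Functional} by $\epsilon$ gives
\[
0\le \frac1\epsilon\int_s^{s+\epsilon}\E[\Gamma^\beta_t]\dt+o(1).
\]
The function $t\mapsto \E[\Gamma^\beta_t]$ is in $L^1([0,T])$. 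This follows from $\E\int_0^T|\beta_t|^2\dt<\infty$, the linear growth of the coefficients in the control implied by Assumption \ref{Assumption:C^{2,1}}, and the square integrability of $p,q,P$. Lebesgue's differentiation theorem then gives $\E[\Gamma^\beta_s]\ge 0$ for a.e.\ $s$.

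One caveat on the expansion: \eqref{eq:Final Expansion of Cost Functional} has to be valid uniformly enough in $E_\epsilon$ that the $o(\epsilon)$ term is $o(\epsilon)$ for every choice $E_\epsilon=[s,s+\epsilon]$. The estimates in Lemma \ref{lemma:Estimates on Y and Z} only use $|E_\epsilon|=\epsilon$, so this holds.

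\emph{Step 2 (localize in $\omega$).} Fix $s$ and let $u\in U$ and $A\in\cF_s$. Take $\beta_t=u\mathbf 1_A+\alpha_t\mathbf 1_{A^c}$ for $t\ge s$, and $\beta_t=\alpha_t$ otherwise. This control is progressively measurable and square integrable; the variation on $A^c$ has $\delta=0$. With this $\beta$, $\Gamma^\beta_t=\mathbf 1_A\Gamma^u_t$ for $t\ge s$, where $\Gamma^u_t$ is the integrand of the corollary with control $u$.

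Rather than working at the fixed point $s$, integrate over a window: for a.e.\ $t$ near $s$ we get $\E[\mathbf 1_A\Gamma^u_t]\ge0$. The cleaner route is the standard one of Peng / Yong--Zhou. Restrict $A$ to a countable generating family of $\cF_s$, with $s$ ranging over the rationals, and use continuity of the filtration to pass to $\cF_t$-measurable sets. This yields $\E[\mathbf 1_A\Gamma^u_t]\ge0$ for all $A\in\cF_t$, with $t$ outside a null set $N_u$. Since $\Gamma^u_t$ is $\cF_t$-measurable, it follows that $\Gamma^u_t\ge0$ a.s.

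\emph{Step 3 (all $u$ simultaneously).} Let $D\subset U$ be a countable dense subset and take the union of the countably many null sets $N_u$, $u\in D$. The map $u\mapsto \Gamma^u_t(\omega)$ is continuous when $A,B,f$ are continuous in $u$. If only measurability in $u$ is assumed, replace $D$ by a countable family of simple controls that is dense in the appropriate sense, using the standard measurable-selection argument.

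The main obstacle is this passage from an integrated inequality to a pointwise one in $(t,\omega)$ holding for all $u$ at once. It requires care with the null sets, with the measurability in $u$, and with the uniformity of the remainder. My first attempt at it is the countable-dense-set argument above.
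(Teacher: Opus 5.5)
Your route is the one the paper compresses into ``by Lebesgue's differentiation theorem'', and Steps 1 and 2 are sound. For a fixed $s$ the expansion is only needed along $E_\epsilon=[s,s+\epsilon]$, so your uniformity caveat is unnecessary. The countable-generator argument works because the augmented Brownian filtration is continuous and countably generated up to null sets.

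The gap is in Step 3, a point the paper's one-line proof also passes over. Assumption~\ref{Assumption:C^{2,1}} gives only measurability of $A,B,f$ in $u$, so the dense-subset argument is unavailable. Your proposed substitute, a countable family of simple controls ``dense in the appropriate sense'', cannot work, because without continuity in $u$ there is nothing to approximate. Suppose the inequality failed exactly at $u=u_0(t,\omega)$ for some progressive $u_0$ whose law at each time is diffuse. Every control taking countably many values agrees with $u_0$ only on a $\mathrm{d}t\otimes\mathrm{d}\P$-null set. So knowing $\E[\Gamma^\beta_t]\ge0$ for countably many such $\beta$ never rules this failure out.

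The missing idea is to put the violating control itself into Step 1, obtained by measurable selection rather than by approximation. Fix $n\in\mathbb{N}$ and let
\[
N_n:=\{(t,\omega):\ \Gamma^u_t(\omega)<0 \text{ for some } u\in U,\ |u|\le n\}.
\]
The map $(t,\omega,u)\mapsto\Gamma^u_t(\omega)$ is measurable for the progressive $\sigma$-field times $\mathcal{B}(U)$, with $U$ Borel as the paper implicitly assumes. Apply the measurable projection and selection theorem on the $\mathrm{d}t\otimes\mathrm{d}\P$-completion of the progressive $\sigma$-field, then pass to progressive versions. This shows $N_n$ is progressive up to a null set. It also gives a progressive $\tilde u_n$ with values in $U\cap\{|u|\le n\}$ such that $\Gamma^{\tilde u_n}<0$ a.e.\ on $N_n$.

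Now set $\beta:=\tilde u_n\mathbf{1}_{N_n}+\alpha\mathbf{1}_{N_n^c}\in\A$. Step 1 gives $\E[\mathbf{1}_{N_n}(t,\cdot)\,\Gamma^{\tilde u_n}_t]\ge0$ for a.e.\ $t$, while the integrand is strictly negative on $N_n$. Hence each $N_n$, and so $\bigcup_n N_n$, is $\mathrm{d}t\otimes\mathrm{d}\P$-null. By Fubini this is exactly the corollary with one exceptional set for all $u$. The same device with $\beta:=u\mathbf{1}_{\{\Gamma^u<0\}}+\alpha\mathbf{1}_{\{\Gamma^u\ge0\}}$ already settles a fixed $u$, so Step 2 becomes unnecessary.
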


\section*{Acknowledgments}
WS and JBS acknowledges support from DFG CRC/TRR 388 'Rough Analysis, Stochastic Dynamics and Related Fields', Projects A10 and B09.

\setcitestyle{numbers}
\bibliographystyle{unsrtnat}
\bibliography{refs}

\end{document}